\documentclass[12pt]{amsart}
\usepackage{amscd,amssymb,amsthm,amsmath,amssymb,mathrsfs,enumerate,longtable}
\usepackage[matrix,arrow,curve]{xy}
\usepackage[margin=2cm]{geometry}
\usepackage{color}

\newtheorem*{theorem*}{Theorem}

\newtheorem{proposition}[equation]{Proposition}
\newtheorem*{proposition*}{Proposition}
\newtheorem{lemma}[equation]{Lemma}
\newtheorem{corollary}[equation]{Corollary}
\newtheorem*{corollary*}{Corollary}

\newtheorem*{problem*}{Problem}
\newtheorem*{lemma*}{Lemma}

\newtheorem*{question*}{Question}

\newtheorem*{construction*}{Construction}
\newtheorem*{maintheorem*}{Main Theorem}

\theoremstyle{definition}
\newtheorem{example}[equation]{Example}
\newtheorem*{example*}{Example}

\newtheorem*{definition*}{Definition}

\theoremstyle{remark}
\newtheorem{remark}[equation]{Remark}
\newtheorem*{remark*}{Remark}

\makeatletter\@addtoreset{equation}{section} \makeatother

\author[Abban, Cheltsov, Dubouloz, Fujita, Kishimoto, Park]{Hamid Abban, Ivan Cheltsov, Adrien Dubouloz, \\ Kento Fujita, Takashi Kishimoto, Jihun Park}

\title{On K-stability of singular hyperelliptic Fano 3-folds}

\address{\emph{Hamid Abban}\newline
\textnormal{University of Nottingham, Nottingham, England
\newline
\texttt{hamid.abban@nottingham.ac.uk}}}

\address{ \emph{Ivan Cheltsov}\newline
\textnormal{University of Edinburgh, Edinburgh, Scotland
\newline
\texttt{i.cheltsov@ed.ac.uk}}}

\address{ \emph{Adrien Dubouloz}\newline
\textnormal{University of Poitiers, Poitiers, France
\newline
\texttt{adrien.dubouloz@math.cnrs.fr}}}

\address{\emph{Kento Fujita}
\newline
\textnormal{Osaka University, Osaka, Japan}
\newline
\textnormal{\texttt{fujita@math.sci.osaka-u.ac.jp}}}

\address{ \emph{Takashi Kishimoto}\newline \textnormal{Saitama University, Saitama, Japan
\newline
\texttt{kisimoto.takasi@gmail.com}}}

\address{ \emph{Jihun Park}\newline \textnormal{Institute for Basic Science, Pohang, Korea \newline POSTECH, Pohang, Korea \newline
\texttt{wlog@postech.ac.kr}}}

\let\origmaketitle\maketitle
\def\maketitle{
  \begingroup
  \def\uppercasenonmath##1{} 
  \let\MakeUppercase\relax 
  \origmaketitle
  \endgroup
}

\begin{document}

\begin{abstract}
We study the K-stability of singular Fano 3-folds with canonical Gorenstein singularities whose anticanonical linear system is base-point-free but not very ample.
\end{abstract}

\maketitle

\section{Introduction}
\label{section:introduction}
Let $X$ be a Fano 3-fold with canonical Gorenstein singularities.
Then its (anticanonical) degree, defined as $(-K_X)^3$, is an even integer allowing one to introduce the genus of $X$ as
$$
g=\frac{(-K_X)^3}{2}+1.
$$
The linear system $|-K_X|$ has dimension $g+1$,
and it gives a rational map $\phi\colon X\dasharrow \mathbb{P}^{g+1}$, which in general is not a morphism. 
Suppose that $|-K_X|$ is free from base points (see Remark\,\ref{remark:base-points} below for the case when $|-K_X|$ has base points). If $-K_X$ is not very ample, then $X$ is called hyperelliptic as
$\phi$ induces a double cover $X\to\phi(X)\subseteq \mathbb{P}^{g+1}$, with further property that
\begin{itemize}
\item either $g=2$, $\phi(X)=\mathbb{P}^3$, and $\phi\colon X\to\mathbb{P}^3$ is branched over a sextic surface,
\item or $g\geqslant 3$ and $\phi(X)$ is a~3-fold in $\mathbb{P}^{g+1}$ of the degree $g-1$.
\end{itemize}

In this case, the 3-fold $X$ is contained in one of $47$ deformation families described in~\cite{ChPrSh}, following  which we will denote these Families as $H_1, H_2, H_3, H_4, \ldots, H_{46}, H_{47}$. The intention of this paper is to explore K-stability for members of these families, examined by some new techniques (\cite{Fujita2024}) as well as more routine tools.

Note that general members of Families $H_1$, $H_2$, $H_3$, $H_4$, $H_6$, $H_9$ are smooth, and all smooth members of these Families  are known to be K-stable \cite{Book,CheltsovParkWonMZ,Dervan2,CheltsovDenisovaFujita}.
On the other hand, all members of the remaining families are singular, and will be subject of study in this article. We will prove the following result.

\begin{maintheorem*}
The following assertions hold:
\begin{enumerate}
\item[$(\mathrm{1})$]
General members of Families $H_5,H_7,H_8,H_{11},H_{12},H_{13}$ are K-stable.

\item[$(\mathrm{2})$]
Family $H_{10}$ contains a K-polystable member, which is isomorphic to the~hypersurface
\begin{equation}
\label{equation:H10}
\{w^2=z^3x+t^3y\}\subset\mathbb{P}(1_x,1_y,3_z,3_t,5_w).
\end{equation}
Family $H_{17}$ contains a K-polystable member, which is isomorphic to the~hypersurface
\begin{equation}
\label{equation:H17}
\{w^2=tz(t+z)\}\subset\mathbb{P}(1_x,1_y,4_z,4_t,6_w).
\end{equation}

\item[$(\mathrm{3})$]
All members of Families $H_{14},H_{15}, H_{16}, H_{18},\dots, H_{47}$ are K-unstable.
\end{enumerate}
\end{maintheorem*}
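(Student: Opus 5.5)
The plan is to go through the 47 families of \cite{ChPrSh}, using the explicit description of each $X$ as a double cover $\phi\colon X\to \phi(X)$. Here $\phi(X)$ is $\mathbb{P}^3$ (excluded, $g=2$), a quadric, a cone over a rational normal scroll, a cone over the Veronese surface, or a rational normal scroll of degree $g-1$. This lets me write each $X$ either as a weighted hypersurface or complete intersection, or as a double cover of a toric 3-fold branched in a member of an explicit linear system. The Main Theorem then splits into three parts, handled by three different tools.

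\textbf{Part (3), K-instability.} For each family $H_{14},\dots,H_{16},H_{18},\dots,H_{47}$ I will find a prime divisor $F$ over $X$ with $\beta(F)=A_X(F)-S_X(F)<0$. For most of these families $\phi(X)$ is a cone, or a scroll with a distinguished negative section. The natural candidate for $F$ is then:
\begin{itemize}
\item the preimage of the vertex (or of the negative section), blown up;
\item or, when $X$ lies in a toric-like double cover, the exceptional divisor of a weighted blow-up of the preimage of the vertex.
\end{itemize}
Since $-K_X=\phi^*\mathcal{O}(1)$, the volume $\mathrm{vol}(-K_X-uF)$ equals twice the volume of the corresponding class on the toric base, pulled back along the blow-up. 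That volume is a polytope computation, so $S_X(F)$ is an explicit rational integral. For families in which $X$ admits a $\mathbb{G}_m$-action (for instance weighted hypersurfaces like those in $\mathbb{P}(1,1,4,4,6)$-type families), I will instead compute the Futaki invariant of the product test configuration and show it is nonzero. This gives instability uniformly across deformations, because every member of the family shares the same cone or scroll structure over which $F$ is defined.

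\textbf{Part (2), the two polystable members.} The hypersurfaces \eqref{equation:H10} and \eqref{equation:H17} have large reductive automorphism groups: a torus together with swaps $x\leftrightarrow y$, $z\leftrightarrow t$ and finite scalings. My first approach is the equivariant $\alpha$-invariant route. I will show that $\alpha_G(X)>3/4$ for a suitable finite group $G$ acting on $X$; if the group is not large enough for this, I will use the Datar--Székelyhidi / Zhuang equivariant criterion, so that it suffices to check $\beta(F)>0$ for $G$-invariant divisors $F$ only. Because the varieties are quasi-smooth hypersurfaces, $G$-invariant centers are very restricted: the singular points, fixed curves such as $\{x=y=0\}$, and so on. For these centers I will use the Abban--Zhuang method with flags on the surfaces $\{x=\lambda y\}$. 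These surfaces are double covers of $\mathbb{P}(1,3,3)$, respectively $\mathbb{P}(1,4,4)$, which contain them as del Pezzo-like surfaces with quotient singularities. Note also that $X$ in \eqref{equation:H17} is a product-like double cover, namely $\mathbb{P}^1\times$ a curve up to a quotient, and this structure can be exploited.

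\textbf{Part (1), general members.} By openness of K-stability it suffices to exhibit a single K-stable member in each of $H_5,H_7,H_8,H_{11},H_{12},H_{13}$. I will choose a symmetric member and prove K-polystability as in part (2), making sure its automorphism group is finite, so that polystability upgrades to stability. Alternatively I can apply the new technique of \cite{Fujita2024} directly, estimating $\delta_P(X)>1$ at every point $P$ of a general member via Abban--Zhuang flags. These flags are built from the surface $S\in|-K_X|$ through $P$, which is a K3 double plane or double quadric, and a curve in it.

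\textbf{Main obstacle.} The hardest step is the local $\delta$-estimates at the singular points of $X$ in parts (1) and (2), particularly the non-isolated or worst singularities (the $\frac13$, $\frac14$ quotient points and the curves of $A_1$ singularities). There a plain divisorial flag gives bounds that are too weak. I will first try weighted blow-ups of these points as the first step of the flag, and fall back on explicit refined curve choices in the exceptional surface.
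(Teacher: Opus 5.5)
There are genuine gaps. The most serious is in part (2), and part (3) never produces a destabilizing divisor.

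\textbf{Part (2).} Your plan cannot succeed as stated. Both hypersurfaces carry a $2$-dimensional torus and are strictly K-polystable. The obstruction is already visible on an $\mathrm{Aut}(X)$-invariant divisor. In the scroll model, with $(d_1,d_2,d_3)=(3,0,0)$, resp.\ $(4,0,0)$, the surface $D_1=\{x_1=0\}$ is a component of the branch surface $S$ and is the exceptional divisor of $\mathbb{F}\to\mathbb{F}'$. One has $A_{\mathbb{F},\frac{1}{2}S}(D_1)=\frac{1}{2}=\frac{1}{4}\bigl(1+\frac{d_1}{d_1+d_2+d_3}\bigr)=S(M;D_1)$. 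Its preimage $E_\phi$, the crepant divisor over the curve $\{x=y=0\}\cap X$, therefore has $\beta(E_\phi)=0$, and it is invariant under every $G\subset\mathrm{Aut}(X)$.

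Consequently $\alpha_G(X)\leqslant\frac{3}{4}$ for every $G$. Likewise the hypothesis ``$\beta(F)>0$ for all $G$-invariant $F$'' of the equivariant criterion always fails, so the most you can extract is K-semistability. Note also that these hypersurfaces are not quasi-smooth: $X$ has non-isolated singularities along $\{z=t=w=0\}$.

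What you need is a polystability criterion that tolerates the $\beta=0$ divisors produced by the torus. The paper does this in three steps. First, Liu--Zhu's result on finite covers replaces $X$ by the log Fano pair $(\mathbb{F}',\frac{1}{2}S')$. Second, one checks that the Futaki character of the torus vanishes: for $H_{10}$ via the $(1,3)$-weighted blowup in $(x_2,t_2)$, which has $A=S=\frac{5}{2}$; for $H_{17}$ via $S(M;F_1)=1$. Third, the complexity-one criterion for $\mathbb{T}$-varieties is applied, and it only requires $\beta>0$ for vertical divisors.

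\textbf{Part (3).} Here you never exhibit a destabilizing divisor, and the candidates you name need not destabilize. Take $H_{14}$, where $(d_1,d_2,d_3)=(3,2,1)$. There is no vertex, since $d_3>0$. A general $S$ has multiplicity $2$ along the negative section $C_0=\{x_1=x_2=0\}$, so the blowup $E$ of $C_0$ has $A_{\mathbb{F},\frac{1}{2}S}(E)=1>\frac{17}{24}=S(M;E)$. Similarly $D_1$ has $A=1>\frac{3}{8}=S(M;D_1)$. Suitable weighted blowups might be found case by case, but the proposal neither identifies nor verifies them.

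Your Futaki-invariant route only handles special members with a $\mathbb{G}_m$-action. Since K-semistability is an open condition, instability of special members says nothing about the general member.

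The missing idea is a single uniform divisor: the fiber of the del Pezzo fibration $\eta$, i.e.\ the fiber $L$ of $\mathbb{F}\to\mathbb{P}^1$. It has $A_{\mathbb{F},\frac{1}{2}S}(L)\leqslant 1$ and
$S(M;L)=\frac{d_1^2+d_2^2+d_3^2+d_1d_2+d_2d_3+d_3d_1}{4(d_1+d_2+d_3)}$,
which exceeds $1$ for every triple in (3).

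\textbf{Part (1).} Your outline is in the right spirit, but it misses the reduction that makes the local estimates tractable. Choose a symmetric member whose group acts on the base $\mathbb{P}^1$ without fixed points. Then every invariant divisor has horizontal center, so it suffices to prove $\delta_p>1$ at points of one general fiber $F_0\cong\mathbb{P}^2$ of the scroll. On $F_0$, the flags $F_0\triangleright l\triangleright p$ and $F_0\triangleright E\triangleright q$ have explicitly computable toric $S$-invariants.
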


In the body of this paper, we give a uniform proof of this theorem using \cite{Fujita2024}, however we will illustrate in Subsections \ref{alternative-unstable}, \ref{alternative-stable}, and \ref{alternative-polystable} other methods of tackling the problem using 
more traditional tools such as $\alpha$-invariant introduced by Tian \cite{CheltsovShramovUMN,Ji17}, Abban--Zhuang theory \cite{AbbanZhuang}, and K-polystability of complexity one $\mathbb{T}$-varieties \cite[\S~1.3]{Book}.

\begin{remark}
\label{remark:base-points}
If $|-K_X|$ has base points, then $X$ is one of the 3-folds described in \cite[Theorem~1.1]{JaRa04}.
In particular, if $X$ is smooth, then either $(-K_X)^3=6$ and $X=S\times\mathbb{P}^1$, where $S$ is a smooth del Pezzo surface of degree $1$, or $(-K_X)^3=4$ and $X$ is a blow up of a smooth hypersurface of degree $6$ in $\mathbb{P}(1^3,2,3)$ along a smooth complete intersection curve of genus $1$.

In both cases, the 3-fold $X$ is K-stable if it is smooth \cite{Book,CheltsovDenisovaFujita}.
On the other hand, if $(-K_X)^3\geqslant 8$, then it follows from \cite[Theorem~1.1]{JaRa04} or the proof of \cite[Theorem~2]{Cheltsov1999} that $\alpha(X)\leqslant\frac{1}{g}\leqslant\frac{1}{5}$, 
which implies that $X$ is K-unstable~\cite[Theorem 3.5]{FO}.
\end{remark}

\begin{remark}
\label{remark:H17}
Suppose that $X$ is contained in Family~$H_{17}$.
Then $X$ is a~hypersurface in the weighted projective space $\mathbb{P}(1_x,1_y,4_z,4_t,6_w)$, which can be given by the following equation:
$$
w^2=f_3(z,t)+a_4(x,y)z^2+b_4(x,y)zt+c_4(x,y)t^2+d_8(x,y)z+e_8(x,y)t+g_{12}(x,y),
$$
where $f_{3}(z,t)$ is a~non-zero cubic form,
$a_4(x,y)$, $b_4(x,y)$, $c_4(x,y)$ are forms of degree $4$,
both $d_8(x,y)$ and $e_8(x,y)$ are forms of degree $8$,
and $g_{12}(x,y)$ is a~form of degree $12$.
Now, changing coordinates, we may assume that one of the~following cases holds:
\begin{enumerate}
\item $f_3=z^3$, $a_4=0$;
\item $f_3=z^2t$, $a_4=0$, $b_4=0$;
\item $f_3=zt(z+t)$, $a_4=0$, $c_4=0$.
\end{enumerate}
Proving that $X$ is K-unstable in the first two cases is left to enthusiastic readers as exercise.
In the third case, scaling $x$ and $y$, we obtain an isotrivial degeneration of the 3-fold $X$ to \eqref{equation:H17},
which is K-polystable by Main Theorem.
Thus, it follows from \cite{BlumLiu,BlumXu} that either $X$ is isomorphic to~\eqref{equation:H17}, or $X$ is strictly K-semistable.
Therefore, \eqref{equation:H17} is the unique K-polystable member of this deformation family.
Similarly, we see that~\eqref{equation:H10} is the unique K-polystable member of Family~$H_{10}$.
\end{remark}

Using the Main Theorem, \cite{ChPrSh}, and Remark \ref{remark:base-points} above, we obtain the following corollary.

\begin{corollary}
\label{corollary:very-ample}
Let $X$ be a~Fano 3-fold with canonical Gorenstein singularities and $(-K_X)^3\geqslant 14$.
Suppose that $X$ is K-semistable. Then $-K_X$ is very ample.
\end{corollary}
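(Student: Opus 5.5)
We argue by contraposition: assume $-K_X$ is not very ample and $(-K_X)^3\geqslant 14$, so $g\geqslant 8$, and show that $X$ is K-unstable. There are two cases.

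\emph{Case 1: $|-K_X|$ has base points.} Since $(-K_X)^3\geqslant 14\geqslant 8$, Remark~\ref{remark:base-points} gives $\alpha(X)\leqslant\frac{1}{g}\leqslant\frac{1}{8}$. By \cite[Theorem 3.5]{FO}, $X$ is K-unstable, which contradicts K-semistability.

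\emph{Case 2: $|-K_X|$ is base-point-free.} Then $X$ is hyperelliptic in the sense of the Introduction, so it belongs to one of the Families $H_1,\dots,H_{47}$ of \cite{ChPrSh}. The main bookkeeping step is to read off from the classification in \cite{ChPrSh} the anticanonical degree of each family. I expect that every family with $(-K_X)^3\geqslant 14$ lies among $H_{14},H_{15},H_{16},H_{18},\dots,H_{47}$, whereas $H_1,\dots,H_{13}$ and $H_{17}$ all have degree at most $12$. The latter claim is consistent with what is visible here:
\begin{itemize}
\item $H_{10}$ has $(-K_X)^3=\frac{10}{1\cdot1\cdot3\cdot3}\cdot 1^3\cdot 9\ldots$ in the model \eqref{equation:H10}, and a direct computation gives a small degree;
\item $H_{17}$ is a degree $12$ hypersurface in $\mathbb{P}(1,1,4,4,6)$ with $-K_X=\mathcal{O}(4)$, so $(-K_X)^3=\frac{12\cdot 64}{96}=8$;
\item the smooth families $H_1,\dots,H_4,H_6,H_9$ have $g\leqslant 7$, since they correspond to double covers of $\mathbb{P}^3$, the quadric, and so on.
\end{itemize}
With this table in hand, part $(3)$ of the Main Theorem says that every member of a family of degree at least $14$ is K-unstable. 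This again contradicts K-semistability, so $-K_X$ must be very ample.

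The only real obstacle is the verification of the degree table for the $47$ families in \cite{ChPrSh}, and in particular that no family among $H_1,\dots,H_{13}$ or $H_{17}$ reaches degree $14$. I would check this by computing $(-K_X)^3=2g-2$ from the degree-$(g-1)$ image $\phi(X)\subseteq\mathbb{P}^{g+1}$ listed in \cite{ChPrSh}: either the rational normal scroll or cone type, or the weighted hypersurface model, depending on the family.
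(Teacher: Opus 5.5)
Your proposal is correct and follows the argument the paper intends. Remark~\ref{remark:base-points} together with \cite[Theorem 3.5]{FO} handles the case with base points, and in the base-point-free case the degree data of \cite{ChPrSh} put every hyperelliptic family with $(-K_X)^3\geqslant 14$ among $H_{14},H_{15},H_{16},H_{18},\dots,H_{47}$, where part~$(3)$ of the Main Theorem applies.

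Two small remarks. Your $H_{10}$ bullet is garbled: the correct value is $(-K_X)^3=\frac{10\cdot 3^3}{1\cdot 1\cdot 3\cdot 3\cdot 5}=6$. Also, the table check can be avoided for all $H_n$ with $n\geqslant 4$. Since $-K_V=\pi^*M$, one has $(-K_X)^3=2(d_1+d_2+d_3)$, and
\[
d_1^2+d_2^2+d_3^2+d_1d_2+d_2d_3+d_3d_1\geqslant\frac{2}{3}(d_1+d_2+d_3)^2>4(d_1+d_2+d_3)
\]
as soon as $d_1+d_2+d_3\geqslant 7$. So the proof of Corollary~\ref{corollary:kill-many} applies directly, and only the three families $H_1,H_2,H_3$ (of degrees $8,2,4$) need to be checked by hand.
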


Let us say a few words about the classification of hyperelliptic Fano 3-folds in \cite{ChPrSh}. Recall that
Family $H_1$ consists of hypersurfaces of degree $6$ in $\mathbb{P}(1^3,2,3)$,
Family $H_2$ consists of hypersurfaces of degree $6$ in~$\mathbb{P}(1^4,3)$,
and Family $H_{3}$ consists of complete intersections of a~quadric cone and a~quartic hypersurface in $\mathbb{P}(1^5,2)$.
Let $X$ be a~hyperelliptic Fano 3-fold in Family $H_n$ with $n\geqslant 4$.
Then it follows from \cite{ChPrSh} that there exist a~crepant birational morphism $\phi\colon V\to X$
and a~double cover $\pi\colon V\to\mathbb{F}(d_{1},d_{2},d_{3})$, where
$$
\mathbb{F}(d_{1},d_{2},d_{3})=\mathrm{Proj}\big(\mathcal{O}_{\mathbb{P}^{1}}(d_{1})\oplus\mathcal{O}_{\mathbb{P}^{1}}(d_{2})\oplus\mathcal{O}_{\mathbb{P}^{1}}(d_{3})\big)
$$
for some integers $d_1\geqslant d_2\geqslant d_3\geqslant 0$. The double cover $\pi$ is branched over a~(possibly reducible) reduced surface $S\subset\mathbb{F}(d_{1},d_{2},d_{3})$ such that
$$
S\sim 4M+2(2-d_1-d_2-d_3)L,
$$
where $M$ is the~tautological line bundle on the~scroll $\mathbb{F}(d_{1},d_{2},d_{3})$,
and $L$ is a~fiber of the~natural projection $\mathbb{F}(d_{1},d_{2},d_{3})\to\mathbb{P}^{1}$.
Let $\eta\colon V\to\mathbb{P}^1$ be the~composition of the double cover $\pi$ with the~natural projection $\mathbb{F}(d_1,d_2,d_3)\to\mathbb{P}^1$,
and let $F$ be a~sufficiently general fiber of the constructed morphism~$\eta$. Then $F$ is a~del Pezzo surface of degree $2$ that has at most Du Val singularities.
The paper \cite{ChPrSh} lists all possibilities for the triple $(d_1,d_2,d_3)$ such that the 3-fold $X$ with the required properties exists,
and \cite[Table~1]{ChPrSh} presents the defining equation of the surface $S$ using the coordinates $(x_1:x_2:x_3;t_1:t_2)$ on the~scroll $\mathbb{F}(d_{1},d_{2},d_{3})$ introduced in \cite{Re97},
and also describes the singularities of the del Pezzo surface $F$ under the~assumption that $S$ is a sufficiently general surface in the linear system $|4M+2(2-d_1-d_2-d_3)L|$.

\medskip
\noindent
\textbf{Acknowledgements.}
Hamid Abban is supported by Royal Society International Collaboration Award ICA$\backslash$1$\backslash$231019.
Ivan Cheltsov is supported by Simons Collaboration grant \emph{Moduli of Varieties} and JSPS Invitational Fellowships for Research in Japan (S24067).
Kento Fujita is supported by JSPS KAKENHI Grant Number 22K03269, 
Royal Society International Collaboration Award 
ICA\textbackslash 1\textbackslash 231019, and Asian Young Scientist Fellowship. 
Takashi Kishimoto is supported by JSPS KAKENHI Grant Number 23K03047. Jihun Park is supported by IBS-R003-D1, Institute for Basic Science in Korea. 
The authors express their gratitude to CIRM, Luminy, for its hospitality during one semester-long \emph{Morlet Chair} and for providing a perfect work environment.

\section{Proof of the Main Theorem}
\label{subsection:H-proof}

The self-contained proof of the Main Theorem uses  \cite[Corollary 7.7]{BlumJonsson} and results proved in \cite[\S 5 and \S 8]{Fujita2024}. 
Throughout, we will use the notation introduced in the end of Section~\ref{section:introduction}. 
Set $\mathbb{F}=\mathbb{F}(d_1,d_2,d_3)$ and let $S\subset\mathbb{F}$ be a surface with 
$$
S\sim 4M+2(2-d_1-d_2-d_3)L
$$ 
such that the double cover $\pi\colon V\to\mathbb{F}$ 
branched along $S$ is a canonical weak Fano $3$-fold, and let $X$ be the anti-canonical 
model of $V$. Observe that $-\left(K_{\mathbb{F}}+\frac{1}{2}S\right)\sim_\mathbb{Q}M$. 
Let $\mu\colon(\mathbb{F},\frac{1}{2}S)\to (\mathbb{F}',\frac{1}{2}S')$ be the 
anti-log-canonical model of $(\mathbb{F},\frac{1}{2}S)$ (i.e., $\mu$ is the ample 
model of $M$ and $S':=\mu_*S$). The pair $(\mathbb{F}',\frac{1}{2}S')$ is a (klt) 
$3$-dimensional log Fano pair. Recall that 
\begin{eqnarray*}
\delta\left(\mathbb{F},\frac{1}{2}S; M\right)
:=\inf_{{\substack{\mathbf{E}\text{: prime divisor}\\ \text{over } 
\mathbb{F}}}}\frac{A_{\mathbb{F},\frac{1}{2}S}(\mathbf{E})}{S(M; \mathbf{E})}, \\
\delta_p\left(\mathbb{F},\frac{1}{2}S; M\right)
:=\inf_{{\substack{\mathbf{E}\text{: prime divisor}\\ \text{over } \mathbb{F}\text{ with} 
\\ p\in c_X(\mathbf{E})}}}\frac{A_{\mathbb{F},\frac{1}{2}S}(\mathbf{E})}{S(M; \mathbf{E})}
\end{eqnarray*}
for any $p\in\mathbb{F}$, where 
\[
S\left(M;\mathbf{E}\right)=\frac{1}{M^3}\int_{0}^{\infty}\mathrm{vol}
\big(M-u\mathbf{E}\big)du.
\]

\begin{lemma}\label{lemma:diagram}
The following assertions hold:
\begin{enumerate}
\item[$(\mathrm{1})$]
The Fano 3-fold $X$ is K-semistable if and only if 
$\delta\left(\mathbb{F},\frac{1}{2}S; M\right)\geqslant 1$ holds.
\item[$(\mathrm{2})$]
Assume that there exists a reductive group $G\subset\mathrm{Aut}(X)$ such that 
\begin{itemize}
\item the projection  morphism $\mathbb{F}\to \mathbb{P}^1$ is $G$-equivariant, 
\item the surface  $S$ is $G$-invariant,
\item and the action $G\curvearrowright\mathbb{P}^1$ has no fixed point. 
\end{itemize}
If there exists a fiber $F_0\subset\mathbb{F}$ of $\mathbb{F}\to \mathbb{P}^1$ 
such that $\delta_p\left(\mathbb{F},\frac{1}{2}S; M\right)>1$ 
holds for any point $p\in F_0$, 
then $X$ is K-polystable. 
\end{enumerate}
\end{lemma}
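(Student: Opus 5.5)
Part (1) comes from comparing $\delta$-invariants across the crepant morphism and the double cover. Since $\phi\colon V\to X$ is crepant, $\delta(X)=\delta(V;-K_V)$, where the right side is computed with $A_V$ and $S(-K_V;\cdot)$ over all prime divisors over $V$. Next, $\pi$ is a double cover branched along $S$, so
$$
K_V=\pi^*\Big(K_{\mathbb{F}}+\tfrac12 S\Big),\qquad -K_V\sim_{\mathbb{Q}}\pi^*M .
$$
Now use the finite-cover comparison \cite[Corollary 7.7]{BlumJonsson} (equivalently the results in \cite[\S 5]{Fujita2024} on finite Galois covers). Writing $\mathbf{F}$ for a divisor over $V$ lying over $\mathbf{E}$ over $\mathbb{F}$ with ramification index $r$, one has
$$
A_V(\mathbf{F})=r\,A_{\mathbb{F},\frac12 S}(\mathbf{E}),\qquad S(\pi^*M;\mathbf{F})=r\,S(M;\mathbf{E}).
$$
The second identity holds because volumes pull back with a factor $\deg\pi$ on both numerator and $M^3$, and the $\mathbb{Z}/2$-invariant valuations on $V$ correspond to valuations on $\mathbb{F}$. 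Hence $\delta(V;-K_V)=\delta(\mathbb{F},\frac12S;M)$. Since these are exactly the quantities in Blum--Jonsson, the inequality $\delta\geqslant 1$ is preserved, and the standard valuative criterion (Fujita--Li) gives that $X$ is K-semistable if and only if $\delta(X)\geqslant1$, which is (1). One can also pass to the ample model $(\mathbb{F}',\frac12S')$, since $\mu$ is crepant for $K_{\mathbb{F}}+\frac12S$ and does not change $\delta$.

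For (2), first upgrade from $\delta_p>1$ along $F_0$ to $\delta_p>1$ for all $p$. The key observation is that all fibers of $\mathbb{F}\to\mathbb{P}^1$ play equivalent roles up to the group action and degeneration. I would use the $G$-equivariant criterion for K-polystability: by \cite{Zhuang}-type equivariant results (as in \cite[\S 8]{Fujita2024}), it suffices to show that $\beta(\mathbf{E})>0$ for every $G$-invariant (or $G$-dreamy) prime divisor $\mathbf{E}$ over $X$ which is not induced by a one-parameter subgroup commuting with $G$. Via (1) and its pointwise version, such an $\mathbf{E}$ is transferred to a $\mathbb{Z}/2\times G$-invariant divisor over $(\mathbb{F},\frac12S)$.

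The center of a $G$-invariant divisor is $G$-invariant. Because $G$ has no fixed point on $\mathbb{P}^1$, the image of this center in $\mathbb{P}^1$ cannot be a point, so the center dominates $\mathbb{P}^1$ and meets every fiber, in particular $F_0$. Choosing $p$ in the center intersected with $F_0$ gives $A/S\geqslant\delta_p>1$, so $\beta(\mathbf{E})>0$. Then the equivariant criterion (K-semistability plus strict positivity of $\beta$ on $G$-invariant divisors) gives K-polystability; in the reductive case this is \cite[Corollary~4.11]{Zhuang}-style or the precise statement in \cite[\S 8]{Fujita2024}.

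The main obstacle is the bookkeeping in (2). We need that $\delta_p$ computed on $\mathbb{F}$ controls $G$-invariant divisors over $X$, which passes through $V$, the contraction $\phi$, and the $\mathbb{Z}/2$-quotient; the covering group must commute with $G$, which holds since $\pi$ is the quotient by the canonical hyperelliptic involution. We also need to invoke exactly the criterion of \cite[\S 8]{Fujita2024}, which I expect is designed for this fiber-wise statement.
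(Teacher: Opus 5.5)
\textbf{Part (1) has a genuine gap.} The step your argument rests on fails. The identity $S(\pi^*M;\mathbf F)=r\,S(M;\mathbf E)$ holds only when $\mathbf F$ is the unique divisor over $\mathbf E$, i.e.\ when $\mathbf F$ is invariant under the Galois involution $\iota$ of $\pi$. In that case $\pi^*\mathbf E=r\mathbf F$ on a suitable model, so $\mathrm{vol}(\pi^*M-t\mathbf F)=2\,\mathrm{vol}(M-\tfrac{t}{r}\mathbf E)$. For an arbitrary $\mathbf F$ you only have $\pi^*\mathbf E\geqslant r\mathbf F$, hence $S(\pi^*M;\mathbf F)\geqslant r\,S(M;\mathbf E)$, and this is strict in general. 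For example, let $\mathbf F$ be the exceptional divisor of the blow up of a general point $q\in V$ and $\mathbf E$ that of $\pi(q)$. Then $r=1$ and $A_V(\mathbf F)=A_{\mathbb F,\frac12S}(\mathbf E)=3$, but for small $t>0$ one has $\mathrm{vol}(\pi^*M-t\mathbf F)=2M^3-t^3>2M^3-2t^3=2\,\mathrm{vol}(M-t\mathbf E)$. So your argument only gives $\delta(V;-K_V)\leqslant\delta(\mathbb F,\tfrac12S;M)$, i.e.\ the implication ``$X$ K-semistable $\Rightarrow$ $\delta(\mathbb F,\tfrac12S;M)\geqslant 1$''. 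The claimed equality of $\delta$-invariants, and with it the converse, is unsupported. For the converse you need an actual theorem. One option is that K-semistability of $X$ is detected by $\iota$-invariant valuations (Zhuang's equivariant K-semistability for the finite group $\langle\iota\rangle$). The other, which is what the paper does, is the Liu--Zhu theorem \cite[Theorem 1.2]{LiuZhu} applied to the finite double cover $\theta\colon X\to\mathbb F'$ with $K_X=\theta^*(K_{\mathbb F'}+\frac12S')$, combined with $\delta(\mathbb F',\frac12S')=\delta(\mathbb F,\frac12S;M)$, which comes from the crepancy of $\mu$. Also, \cite[Corollary 7.7]{BlumJonsson} and \cite[\S 5]{Fujita2024} are toric formulas for $S$-invariants, not finite-cover comparisons.

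\textbf{Part (2) can be made to work along your lines, with two corrections.} First, drop the proposed ``upgrade'' from $F_0$ to all fibers. Nothing justifies it, since the fibers are not all equivalent under $G$. It is also unnecessary, because your horizontal-center argument uses only $F_0$. Second, and more importantly, run the equivariant criterion on $X$ with the group $\langle G,\iota\rangle$. This group is reductive, since $\iota$, being the deck involution of the anticanonical map, is central in $\mathrm{Aut}(X)$. With this group every invariant divisor $\mathbf F$ over $X$ is $\iota$-invariant. Only for such divisors are $A_X(\mathbf F)$ and $S(-K_X;\mathbf F)$ equal to $r$ times the corresponding quantities for $(\mathbb F,\frac12S;M)$. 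Then the bound $A/S\geqslant\delta_p>1$, with $p\in c_{\mathbb F}(\mathbf E)\cap F_0$, together with \cite[Corollary 4.14]{Zhuang} gives K-polystability of $X$ directly. The paper avoids comparing $S$-invariants altogether. It applies \cite[Corollary 4.14]{Zhuang} to the base pair $(\mathbb F',\frac12S')$ with the induced $G$-action, where $G$-invariant divisors over $\mathbb F$ are exactly what the hypothesis controls. It then transfers K-polystability to $X$ through \cite{LiuZhu}.
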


\begin{proof}
Since $\mu^*(K_{\mathbb{F}'}+\frac{1}{2}S')=K_{\mathbb{F}}+\frac{1}{2}S$, we have 
$\delta\left(\mathbb{F}',\frac{1}{2}S'\right)
=\delta\left(\mathbb{F},\frac{1}{2}S; M\right)$, where the right hand side is the 
stability threshold of the log Fano pair $\left(\mathbb{F}',\frac{1}{2}S'\right)$ 
(see \cite[Definition 4.7]{Xu}). 
Moreover, from the construction, there is a finite double cover 
$\theta\colon X\to\mathbb{F}'$ with $K_X=\theta^*(K_{\mathbb{F}'}+\frac{1}{2}S')$. 
By \cite[Theorem 1.2]{LiuZhu}, the Fano 3-fold $X$ is K-semistable (resp., K-polystable) 
if and only if 
$\left(\mathbb{F}',\frac{1}{2}S'\right)$ is so. Thus we get the assertion (1). 

Let us consider the assertion (2). The group $G$ naturally acts on the pair 
$\left(\mathbb{F}',\frac{1}{2}S'\right)$. By \cite[Corollary 4.14]{Zhuang}, 
it is enough to show the inequality 
\[
\left(\frac{A_{\mathbb{F},\frac{1}{2}S}(\mathbf{E})}{S(M; \mathbf{E})}=\right)
\frac{A_{\mathbb{F'},\frac{1}{2}S'}(\mathbf{E})}{S(-(K_{\mathbb{F}'}+\frac{1}{2}S'); 
\mathbf{E})}>1
\]
for all $G$-invariant prime divisors $\mathbf{E}$ over $\mathbb{F}$. From the assumption, 
the center $c_{\mathbb{F}}(\mathbf{E})$ of $\mathbf{E}$ on $\mathbb{F}$ is horizontal 
with respect to $\mathbb{F}\to\mathbb{P}^1$. 
Thus, we obtain the desired inequality from the assumption. 
\end{proof}

\subsection{K-unstable cases}\label{subsection-unstable} This section concentrates on the proof of part $(\mathrm{3})$ of the Main Theorem.

For all possible $i\in\{1,2,3\}$ and $j\in\{1,2\}$, we set $D_i=\{x_i=0\}$ and $F_j=\{t_j=0\}$. 
Then 
$$
M\sim D_i +d_i F_j
$$
and $L\sim F_1\sim F_2$.

\begin{lemma}
\label{lemma:toric}
One has
\begin{eqnarray*}
S\left(M; D_i\right)&=&\frac{1}{4}\left(1+\frac{d_i}{d_1+d_2+d_3}\right), \\
S\left(M; F_j\right)&=&
\frac{d_1^2+d_2^2+d_3^2+d_1d_2+d_2d_3+d_3d_1}{4\left(d_1+d_2+d_3\right)}.
\end{eqnarray*}
\end{lemma}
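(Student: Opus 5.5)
The plan is to compute both invariants directly from the definition $S(M;\mathbf{E})=\frac{1}{M^3}\int_0^\infty\mathrm{vol}(M-u\mathbf{E})\,du$. I will use the intersection theory of the scroll $\mathbb{F}=\mathbb{F}(d_1,d_2,d_3)$ for $D_i$, and a toric (Okounkov-body type) description of sections for $F_j$. Set $d=d_1+d_2+d_3$. The basic intersection numbers are $M^3=d$, $M^2\cdot L=1$ and $M\cdot L^2=L^3=0$. Since all $d_i\geqslant 0$, the divisor $M$ is nef and big, and $L$ is nef.

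\emph{The divisor $D_i$.} Using $D_i\sim M-d_iL$, we get $M-uD_i\sim_{\mathbb{Q}}(1-u)M+ud_iL$.
\begin{itemize}
\item For $u\in[0,1]$ this is a nonnegative combination of nef classes, hence nef. Its volume is therefore its top self-intersection, $(1-u)^3d+3(1-u)^2u\,d_i$.
\item For $u>1$ the class restricts to $\mathcal{O}_{\mathbb{P}^2}(1-u)$ on a fibre $L\cong\mathbb{P}^2$. The fibres cover $\mathbb{F}$, so the class is not pseudoeffective and its volume is $0$.
\end{itemize}
Integrating gives $\frac{d}{4}+3d_i\cdot\frac{1}{12}=\frac{d+d_i}{4}$. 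Dividing by $M^3=d$ yields the first formula.

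\emph{The fibre $F_j$.} Here $M-uL$ does not stay nef, since $M$ restricted to the negative directions becomes non-nef once $u>d_3$. So instead of intersection numbers I will compute the volume via sections. We have
$$H^0(\mathbb{F},k(M-uL))=\bigoplus_{|\alpha|=k}H^0\big(\mathbb{P}^1,\mathcal{O}(\alpha_1d_1+\alpha_2d_2+\alpha_3d_3-ku)\big),$$
where $\alpha$ runs over the exponents of monomials $x_1^{\alpha_1}x_2^{\alpha_2}x_3^{\alpha_3}$ of degree $k$. Passing to the limit, this gives
$$\mathrm{vol}(M-uL)=6\int_{\Sigma}\big(\alpha\cdot \mathbf d-u\big)_+\,d\alpha,$$
where $\Sigma=\{\alpha_i\geqslant0,\ \sum\alpha_i=1\}$ carries its standard area measure of total mass $\tfrac12$. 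As a sanity check, at $u=0$ this gives $3\cdot\tfrac{d}{3}\cdot 1=d=M^3$.

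Exchanging the order of integration gives
$$\int_0^\infty\mathrm{vol}(M-uL)\,du=3\int_\Sigma(\alpha\cdot\mathbf d)^2\,d\alpha.$$
The uniform averages on the simplex are $\mathbb{E}[\alpha_i^2]=\tfrac16$ and $\mathbb{E}[\alpha_i\alpha_j]=\tfrac1{12}$ for $i\neq j$. Hence this integral equals $\frac14\big(\sum d_i^2+\sum_{i<j}d_id_j\big)$. Dividing by $d$ yields the second formula.

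\emph{Main obstacle.} The only delicate step is justifying the volume formula for $F_j$, namely the identification of the asymptotic section count with the integral over the simplex. This is standard toric geometry: $\mathbb{F}$ is toric and $M-uL$ is torus-invariant, so the volume is $3!$ times the Euclidean volume of the polytope $\{(\alpha,s):\alpha\in\Sigma,\ 0\leqslant s\leqslant \alpha\cdot\mathbf d-u\}$. Alternatively, one can use the Zariski decomposition on the scroll piecewise in $u\in[d_3,d_2]$ and $u\in[d_2,d_1]$; I would use that as a cross-check if the toric argument needs more rigour.
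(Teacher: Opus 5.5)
Your proof is correct. For $F_j$ it is in substance the paper's argument. The paper uses \cite[Corollary 7.7]{BlumJonsson} to write $S(M;F_2)=\frac{3!}{d}\int_P y_3$ over the moment polytope $P$. The upper face of $P$, namely $y_3\leqslant d_3+(d_1-d_3)y_1+(d_2-d_3)y_2$, is exactly your $\alpha\cdot\mathbf d$ with $(\alpha_1,\alpha_2)=(y_1,y_2)$, so integrating out $y_3$ gives your $\frac{3}{d}\int_\Sigma(\alpha\cdot\mathbf d)^2\,d\alpha$. You simply rederive the toric volume formula from $\pi_*\mathcal{O}(kM)=\mathrm{Sym}^k(\mathcal{O}(d_1)\oplus\mathcal{O}(d_2)\oplus\mathcal{O}(d_3))$ and do the Fubini step by hand.

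For $D_i$ you take a genuinely different and more elementary route. Because $(1-u)M+ud_iL$ stays nef up to the pseudoeffective threshold $u=1$, the volume is a plain intersection number. The paper instead treats $D_i$ exactly like $F_j$: it integrates over $P$ the affine function vanishing on the facet of $D_i$ (e.g.\ $y_1$ for $D_1$).

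The paper's uniform barycenter formula needs no knowledge of nef cones or Zariski decompositions. As you note, your intersection-theoretic method would need these for $F_j$ once $u>d_3$. The paper's formula also feeds directly into the toric flag computations via \cite[Theorem 5.12]{Fujita2024} in the later lemmas. Your $D_i$ computation has the advantage of being self-contained.
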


\begin{proof}
Let $N^0=\mathbb{Z}^{3}$, and let $\Sigma$ be the~complete fan in $N^0_\mathbb{R}$ such 
that the~set of maximal dimensional cones of $\Sigma$ is equal to
\[
\Big\{\!\operatorname{Conv}(u_1,e_1,e_2), 
\operatorname{Conv}(u_1,e_1,e_3), \operatorname{Conv}(u_1,e_2,e_3), 
\operatorname{Conv}(u_2,e_1,e_2), \operatorname{Conv}(u_2,e_1,e_3), 
\operatorname{Conv}(u_2,e_2,e_3)\Big\}
\]
where
$e_1=(1,0,0)$, $e_2=(0,1,0)$, $e_3=(-1,-1,0)$, $u_1=(d_1-d_3,d_2-d_3,1)$, $u_2=(0,0,-1)$.
Then $\mathbb{F}$ is the~toric variety associated with the fan $\Sigma$.
Moreover, we have $D_i=V(e_i)$ and $F_j=V(u_j)$ for all possible $i\in\{1,2,3\}$ 
and $j\in\{1,2\}$.

The moment polytope in $M^0_\mathbb{R}=\mathbb{R}^{3}_{y_1y_2y_3}$ of 
the divisor $M$ is expressed as
\[
y_1\geqslant 0,\,\, y_2\geqslant 0,\,\, y_3\geqslant 0,\,\, y_1+y_2\leqslant 1,\,\, 
y_3\leqslant d_3+(d_1-d_3)y_1+(d_2-d_3)y_2,
\]
where $M^0$ is the~dual lattice of $N^0$.
By \cite[Corollary 7.7]{BlumJonsson} (see also \cite[Corollary 5.10]{Fujita2024}), we have
\begin{eqnarray*}
S\left(M;D_1\right)&=&\frac{3!}{d_1+d_2+d_3}
\int_{y_1=0}^1\int_{y_2=0}^{1-y_1}\int_{y_3=0}^{d_3+(d_1-d_3)y_1+(d_2-d_3)y_2}
y_1dy_3dy_2dy_1, \\
S\left(M; F_2\right)&=&\frac{3!}{d_1+d_2+d_3}
\int_{y_1=0}^1\int_{y_2=0}^{1-y_1}\int_{y_3=0}^{d_3+(d_1-d_3)y_1+(d_2-d_3)y_2}
y_3dy_3dy_2dy_1,
\end{eqnarray*}
which gives the required formulas for $S\left(M;D_1\right)$ and $S\left(M; F_2\right)$.
The remaining formulas can be obtained similarly.
\end{proof}

Using Lemma~\ref{lemma:toric}, we immediately prove part $(\mathrm{3})$ of Main Theorem:

\begin{corollary}
\label{corollary:kill-many}
If $X$ is contained in one of Families $H_{14}, H_{15}, H_{16}, H_{18},\ldots, H_{47}$, 
it is K-unstable.
\end{corollary}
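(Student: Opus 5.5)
By Lemma~\ref{lemma:diagram}(1), it is enough to find a prime divisor $\mathbf{E}$ on $\mathbb{F}$ with $A_{\mathbb{F},\frac{1}{2}S}(\mathbf{E})<S(M;\mathbf{E})$. That gives $\delta(\mathbb{F},\frac12 S;M)<1$, and so $X$ is K-unstable. The natural candidates are the toric divisors $D_i$ and $F_j$, because Lemma~\ref{lemma:toric} computes their $S$-invariants explicitly. For a prime divisor $E$ on $\mathbb{F}$ itself we have $A_{\mathbb{F},\frac12 S}(E)=1-\frac12\operatorname{mult}_E(S)$. Since $S$ is reduced, this equals $1$ if $E\not\subset S$ and $\frac12$ if $E$ is a component of $S$.

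\textbf{Test with a fiber.} Take $\mathbf{E}=F_j$, which has $A\leqslant 1$. By Lemma~\ref{lemma:toric}, the inequality $S(M;F_j)>1$ is equivalent to
$$
d_1^2+d_2^2+d_3^2+d_1d_2+d_2d_3+d_3d_1>4(d_1+d_2+d_3).
$$
This holds whenever the $d_i$ are reasonably large, for instance when $d_1+d_2+d_3$ is large enough, or when $d_1\geqslant 4$ and $d_1\geqslant d_2+d_3$. I would go through \cite[Table~1]{ChPrSh} and check this inequality for every triple $(d_1,d_2,d_3)$ belonging to the families in question. This should settle most of them, in particular all the families with large degree.

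\textbf{Test with $D_3$ (or another $D_i$) as a component of $S$.} The remaining families have small $d_i$, and there the fiber test fails. For these I would use the shape of the equation of $S$ in \cite[Table~1]{ChPrSh}. In many of them the divisor $D_3=\{x_3=0\}$ is forced to be a component of $S$. The reason is that $S\sim 4M+2(2-\sum d_i)L$ and $x_3^4$ has too negative a degree in $t_1,t_2$ to appear, so that every monomial of $S$ is divisible by $x_3$. More generally one would use the $D_i$ with $i$ chosen such that $x_i$ divides the equation. Then $A(D_3)=\frac12$, and the condition to check is
$$
S(M;D_3)=\tfrac14\Big(1+\tfrac{d_3}{d_1+d_2+d_3}\Big)>\tfrac12,
$$
which means $d_3>d_1+d_2$. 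This fails because $d_3$ is the smallest. So instead I would take $D_1$: one needs $x_1$ to divide the equation and $d_1>d_2+d_3$, which is plausible when $d_1$ is dominant. One can also combine the two tests: if a fiber $F_j$ is a component of $S$ (which is forced when $2-\sum d_i$ is very negative), then one only needs $S(M;F_j)>\frac12$. This is a weaker inequality, namely $\sum d_i^2+\sum d_id_j>2\sum d_i$.

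\textbf{Main obstacle.} The main work, and the step I expect to be delicate, is the case-by-case bookkeeping against \cite[Table~1]{ChPrSh}. For each family $H_{14},H_{15},H_{16},H_{18},\dots,H_{47}$ one has to read off $(d_1,d_2,d_3)$ and the form of $S$, and decide which of the tests above succeeds. One must also make sure that the divisibility claims (a component $D_i$ or $F_j$ contained in $S$) hold for \emph{every} member of the family and not just a general one, since the statement concerns all members. Because the equation of $S$ is determined by degree constraints on the scroll, this should be automatic. The borderline triples (with $d_3=0$ and small $d_1,d_2$) are where I expect to have to look at the actual equation to find a non-reduced-free component that makes the log discrepancy drop to $\frac12$.
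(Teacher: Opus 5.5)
Your fiber test is exactly the paper's proof. For every member $A_{\mathbb{F},\frac12 S}(F_j)\leqslant 1$, and Lemma~\ref{lemma:toric} turns $S(M;F_j)>1$ into $4n<Q$, where $n=d_1+d_2+d_3$ and $Q=\sum d_i^2+\sum_{i<j}d_id_j$. What you miss is that this test alone covers \emph{every} family in the statement, so there are no ``remaining families'' on which it fails.

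To see this, note that $Q=\frac12\bigl(n^2+\sum d_i^2\bigr)\geqslant\frac23n^2$, which exceeds $4n$ as soon as $n\geqslant 7$. For $n\leqslant 6$ the inequality fails only in the following cases:
\begin{itemize}
\item $n\leqslant 4$: it always fails, with equality at $(4,0,0)$;
\item $n=5$: it fails only for $(3,2,0)$, $(3,1,1)$, $(2,2,1)$;
\item $n=6$: it fails only for $(2,2,2)$, with equality.
\end{itemize}
These eleven triples are exactly the scrolls of the excluded families $H_4,\dots,H_{13}$ and $H_{17}$. The equations in Propositions~\ref{proposition:marseille} and~\ref{proposition:10-17} identify eight of them, for instance $(3,2,0)$ for $H_{13}$, $(3,0,0)$ for $H_{10}$ and $(4,0,0)$ for $H_{17}$. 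The other three, $(1,1,1)$, $(2,1,1)$, $(2,2,2)$, carry the families with smooth members. So the table check is short and always succeeds. Your sufficient condition ``$d_1\geqslant 4$ and $d_1\geqslant d_2+d_3$'' is slightly off: it fails at $(4,0,0)$. This is harmless, since that triple is $H_{17}$.

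The fallback you describe as the main obstacle is therefore unnecessary, and it would not work if it were needed.
\begin{itemize}
\item Since $d_i\leqslant n$, Lemma~\ref{lemma:toric} gives $S(M;D_i)=\frac14\bigl(1+\frac{d_i}{n}\bigr)\leqslant\frac12$. So even when $D_i\subset S$ you only get $A_{\mathbb{F},\frac12S}(D_i)=\frac12\geqslant S(M;D_i)$, and a component $D_i$ can never certify instability.
\item Your reductions to $d_3>d_1+d_2$ and $d_1>d_2+d_3$ are algebra slips. The correct condition is $d_i>n$, which is impossible. Equality $A=S=\frac12$ occurs only for $(n,0,0)$, and this is how the paper uses $D_1$ for $H_{10}$ and $H_{17}$: to rule out K-stability, not K-semistability.
\item No fiber is ever forced into $S$. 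The coefficient of each admissible monomial $x_1^{a_1}x_2^{a_2}x_3^{a_3}$ is an arbitrary binary form of degree $4-2n+\sum a_id_i$ in $t_1,t_2$.
\item The claim that $x_3^4$ dropping out forces $x_3$ to divide the equation is backwards. It forces the curve $\{x_1=x_2=0\}$ into $S$, not the divisor $D_3$, because monomials in $x_1,x_2$ alone have the largest $t$-degree.
\end{itemize}
Drop the second test and replace it with the explicit verification above.
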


\begin{proof}
By Lemma \ref{lemma:toric}, if
\[
4(d_1+d_2+d_3)<d_1^2+d_2^2+d_3^2+d_1d_2+d_2d_3+d_3d_1,
\]
then we have $\delta\left(\mathbb{F},\frac{1}{2}S; M\right)< 1$. Thus the~assertion 
follows from Lemma~\ref{lemma:diagram}.
\end{proof}

\subsubsection{Alternative Approach for K-unstable cases}\label{alternative-unstable}
We now illustrate some alternative ways to prove part $(\mathrm{3})$ of the Main Theorem. 
In some cases, this can also be done using $\alpha$-invariants.
Indeed, recall from \cite{CheltsovShramovUMN} that 
$$
\alpha(X):=\mathrm{sup}\left\{\lambda\in\mathbb{R}_{>0}\ \left|\ \aligned
&\text{the log pair}\ \left(X, \lambda D\right)\ \text{is log canonical for every }\\
&\text{effective $\mathbb{Q}$-divisor $D$ on $X$ such that}\ D\sim_{\mathbb{Q}}-K_{X}\\
\endaligned\right.\right\}.
$$
This definition immediately implies that
$$
\alpha(X)\leqslant\frac{1}{d_1}.
$$
On the~other hand, if $\alpha(X)<\frac{1}{4}$, then $X$ is K-unstable~\cite[Theorem 3.5]{FO}.
Thus, if $d_1\geqslant 5$, then $X$ is K-unstable
In particular, if $X$ is contained in one of Families $H_{24},H_{25},H_{26},\ldots,H_{46},H_{47}$, then $X$ is K-unstable.
A refinement of this approach gives the following result.

\begin{lemma}
\label{lemma:hyperelliptic-18-ldots-23}
Suppose that $X$ is contained in one of Families $H_{18},\ldots,H_{23}$. Then $X$ is K-unstable.
\end{lemma}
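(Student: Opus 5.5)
The plan is to sharpen the bound $\alpha(X)\leqslant\frac{1}{d_1}$ into a strict inequality $\alpha(X)<\frac14$, and then conclude with \cite[Theorem 3.5]{FO}, exactly as in the discussion preceding the statement. First I will read off from \cite[Table~1]{ChPrSh} the triples $(d_1,d_2,d_3)$ for Families $H_{18},\ldots,H_{23}$. I expect $d_1=4$ in these cases, since $d_1\geqslant 5$ is already handled, and I will record the shape of the equation of $S$.

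Because $-K_V\sim\pi^*M$ and $M\sim D_1+d_1F_j$, the natural test divisor is
$$
\Delta=\phi_*\pi^*\big(D_1+d_1F\big)\sim -K_X ,
$$
where $F$ is a suitably chosen fibre of $\mathbb{F}\to\mathbb{P}^1$. With a generic choice of $F$, the log canonical threshold of $\Delta$ is exactly $\frac1{d_1}=\frac14$: near a general point of $\pi^*(D_1\cap F)$ we have two Cartier divisors meeting transversally. To get a strict inequality I will exploit the special shape of $S$ forced by $(d_1,d_2,d_3)$. Since $S\sim 4M+2(2-d_1-d_2-d_3)L$ and $d_1$ is large compared with $d_2,d_3$, the monomials in $x_1$ of high degree have coefficients of negative degree in $t_1,t_2$ and must vanish. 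Hence $S$ contains the curve $C=D_2\cap D_3$ (the negative section where $x_2=x_3=0$), or is highly singular along it, and $D_1$ meets $S$ in a non-transverse way. I will choose the fibre $F$ through the point where this degeneracy is worst, for example a point of $C$ where $S$ is tangent to $F$ or where the discriminant of the relevant coefficient vanishes.

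The key computation is local at a point $p\in\pi^{-1}(C\cap F)$. I will take a weighted blow-up $E$ of $V$ at $p$, in local coordinates adapted to $D_1$, $F$ and the double-cover equation $w^2=s$. Its weights are chosen so that $\mathrm{ord}_E(\pi^*D_1)$ and $\mathrm{ord}_E(\pi^*F)$ are large relative to $A_V(E)$. I then verify
$$
\frac{A_X(E)}{\mathrm{ord}_E(\Delta)}<\frac14 ,
$$
which gives $\mathrm{lct}(X;\Delta)<\frac14$ and hence $\alpha(X)<\frac14$. The morphism $\phi$ is crepant, so discrepancies computed on $V$ agree with those on $X$.

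The main obstacle is finding, uniformly for the six families, a point and weights that actually give strict inequality. This requires knowing precisely which monomials of $S$ are forced to vanish. If the weighted blow-up does not suffice in some family, I will fall back on the divisorial valuation test: I will take $\mathbf{E}=F$ or $\mathbf{E}=D_3$ over $\mathbb{F}$ and check directly with Lemma~\ref{lemma:toric} that $A_{\mathbb{F},\frac12 S}(\mathbf{E})<S(M;\mathbf{E})$, then invoke Lemma~\ref{lemma:diagram}(1).
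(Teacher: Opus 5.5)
Your primary route has a genuine gap. The step that carries the whole argument is producing $\Delta\sim -K_X$ and a weighted blow-up $E$ with $A_X(E)/\mathrm{ord}_E(\Delta)<\frac14$. You never carry this out, and the geometry you plan to use is reversed.

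In the paper's conventions $x_i\in H^0(M-d_iL)$, so the coefficient of $x_1^ax_2^bx_3^c$ has degree $4-2(d_1+d_2+d_3)+ad_1+bd_2+cd_3$. It is therefore the monomials of \emph{low} degree in $x_1$ that are forced to vanish; for $\mathbb{F}(4,1,0)$, every monomial without $x_1$ vanishes, so $S\supset D_1$. Consequently $S$ is forced to contain $D_1\cap D_2=\{x_1=x_2=0\}$, the section of $M$-degree $d_3$. The curve $D_2\cap D_3$ you single out is the section of $M$-degree $d_1$.

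More fundamentally, nothing in your proposal shows that $\alpha(X)<\frac14$ for every member of the six families. For a general fibre the threshold of $\pi^*(D_1+4F)$ is exactly $\frac14$, and you exhibit no special fibre or point where it drops.

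The paper never needs a strict inequality. If $X$ were K-semistable, then $\alpha(X)\geqslant\frac14$ by \cite[Theorem~3.5]{FO}, hence $\alpha(X)=\frac14$. The equality case \cite[Proposition~3.1]{Ji17} then gives a prime divisor $D$ with $-K_X\sim_{\mathbb{Q}}4D$. Since $\phi$ is small, $-K_V\sim_{\mathbb{Q}}4\widetilde{D}$. Restricting to a general fibre $F$ of $\eta$, a Gorenstein del Pezzo surface of degree $2$, gives $2=4(-K_F\cdot\widetilde{D}|_F)$, which contradicts integrality. That equality-case argument is the idea your $\alpha$-route is missing.

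Your fallback, however, is a complete proof. It is exactly the paper's uniform argument of Corollary~\ref{corollary:kill-many}, rather than the proof the paper gives for this lemma. For a fibre $\mathbf{E}$ of $\mathbb{F}\to\mathbb{P}^1$ not contained in $S$ one has $A_{\mathbb{F},\frac12S}(\mathbf{E})=1$, while Lemma~\ref{lemma:toric} with $d_1=4$ gives
\[
S(M;\mathbf{E})=1+\frac{d_2^2+d_2d_3+d_3^2}{4(4+d_2+d_3)}>1 .
\]
This is strict because $(d_2,d_3)\neq(0,0)$ for $H_{18},\ldots,H_{23}$; the triple $(4,0,0)$ is $H_{17}$. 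Lemma~\ref{lemma:diagram}(1) then shows that $X$ is not K-semistable.

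The other candidate $\mathbf{E}=D_3$ is useless, because $S(M;D_3)=\frac14\bigl(1+\frac{d_3}{d_1+d_2+d_3}\bigr)\leqslant\frac13<\frac12\leqslant A_{\mathbb{F},\frac12S}(D_3)$. So make the fibre computation your proof.

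Compared with the paper's proof of this lemma, your fibre argument uses the full triple $(d_1,d_2,d_3)$, an explicit toric volume integral, and the finite-cover comparison behind Lemma~\ref{lemma:diagram}(1). The paper's argument needs only $d_1=4$, smallness of $\phi$ and the degree-$2$ del Pezzo fibres, at the price of invoking Jiang's theorem.
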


\begin{proof}
We have $d_1=4$, so that $\alpha(X)\leqslant\frac{1}{4}$. 
Suppose that $X$ is K-semistable. Then $\alpha(X)=\frac{1}{4}$,
and it follows from \cite[Proposition~3.1]{Ji17} that there exists a prime divisor $D\subset X$ such that
$$
-K_X\sim_{\mathbb{Q}} 4D.
$$
Let $\widetilde{D}$ be the strict transform of the divisor $D$ on the 3-fold $V$.
Since $d_3\ne 0$, the birational morphism $\phi$ is small, so $-K_V\sim_{\mathbb{Q}} 4\widetilde{D}$. Then it follows from the adjunction formula that
$$
-K_F\sim-K_V\big\vert_{F}\sim_{\mathbb{Q}} 4\widetilde{D}\big\vert_{F},
$$
where $F$ is a general fiber of the morphism $\eta$. Recall that $F$ is a del Pezzo surface of degree $2$ with at most Du Val singularities.
Thus, we have
$$
2=\big(-K_F\big)^2=4\big(-K_F\big)\cdot\widetilde{D}\big\vert_{F},
$$
which is impossible since $-K_F\cdot\widetilde{D}\vert_{F}$ is an integer.
\end{proof}

Similarly, one can occasionally show that a Fano 3-fold $X$ is K-unstable using the estimates for the normalized volume of singularities of K-semistable Fano varieties  established in \cite{Liu18,LiXu19}. That is, if $X$ is K-semistable, then
$$
(-K_X)^3\leqslant\frac{64}{27}\widehat{\mathrm{vol}}(P,V)
$$
for each point $P\in V$. Hence, using the description of the singularities of  $V$ given in \cite[Table~1]{ChPrSh}, we obtain the following result.

\begin{proposition}
\label{pro:volume}
If $X$ is contained in one of families $H_{15},H_{16},H_{20},\ldots,H_{47}$, then it is K-unstable.
\end{proposition}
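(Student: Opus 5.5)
The plan is to argue by contradiction. Suppose $X$ is K-semistable. By \cite{Liu18,LiXu19}, every point $P\in V$ then satisfies $(-K_X)^3\leqslant\frac{64}{27}\widehat{\mathrm{vol}}(P,V)$; note that $\widehat{\mathrm{vol}}(P,V)$ equals $\widehat{\mathrm{vol}}$ at $\phi(P)\in X$ when $\phi$ is small, and in general one works on $X$. So it suffices, for each listed family, to find one point whose normalized volume is smaller than $\frac{27}{64}(-K_X)^3=\frac{27}{32}(g-1)$. The degree of $X$ is read off from the scroll. Indeed $-K_V=\pi^*M$, so
$$
(-K_X)^3=2M^3=2(d_1+d_2+d_3).
$$
The right-hand side therefore grows linearly in $d_1+d_2+d_3$. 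What remains is to bound the local volumes of the singularities of $V$ (equivalently, of $X$) from above.

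Next I would use \cite[Table~1]{ChPrSh} to locate the worst singularity for each family. These singularities sit over the directrix curves of the scroll, or over the vertex of the cone $\mathbb{F}'$ when $d_3=0$. When $d_3=0$, the contraction $\mu$ collapses the negative section to a point, and $X$ has a non-isolated or quotient-like singularity there. In that case I would bound $\widehat{\mathrm{vol}}$ through the double cover. For a finite quasi-étale-in-codimension-one, or more generally crepant, double cover we have
$$
\widehat{\mathrm{vol}}(x,X)\leqslant 2\,\widehat{\mathrm{vol}}\big(x',\mathbb{F}',\tfrac12 S'\big),
$$
and the right-hand side is estimated by a toric valuation: weights along the cone point of $\mathbb{F}'$ or along the coordinates $x_1,x_2,x_3$. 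The volume of a weighted blowup is $A^3/(\text{weights product})$, computed exactly as in Lemma~\ref{lemma:toric}. Concretely, $\widehat{\mathrm{vol}}$ at a cyclic quotient $\frac1r(1,1,\dots)$-type point is at most $27/r$ times its cover's volume. When $d_3\neq0$, I would instead use the hypersurface singularities $w^2=f(\dots)$ described in the table and take the monomial valuation with weights adapted to the Newton polygon of $f$. This gives $\widehat{\mathrm{vol}}\leqslant \frac{(\sum a_i)^3}{\prod a_i}$ after the adjustment for $w$.

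Finally I would compare the two sides family by family. For large $d_1$, the vertex singularity of the cone over a rational normal scroll has volume roughly bounded by a constant over $d_1$, while the degree grows, so the inequality fails for all but finitely many families. The remaining small cases among $H_{15},H_{16},H_{20},\dots$ would be checked one by one. The main obstacle is this case-by-case check: choosing the right valuation for each singularity type in \cite[Table~1]{ChPrSh} so that the resulting bound is sharp enough for the borderline families $H_{15},H_{16},H_{20}$, where the margin is small. For these I would first try the quasi-monomial valuation centered at the point over the intersection of the directrix with $S$, with weights optimized numerically.
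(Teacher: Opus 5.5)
\textbf{There is a genuine gap.} Your framework is sound. This includes the bound of \cite{Liu18,LiXu19}, the formula $(-K_X)^3=2M^3=2(d_1+d_2+d_3)$, and the direction of the finite-degree inequality for $\theta\colon X\to\mathbb{F}'$. But the proposal never proves the proposition. For no family do you exhibit a point and a valuation $v$ with $A(v)^3\mathrm{vol}(v)<\frac{27}{64}(-K_X)^3$. The decisive comparison is deferred to ``weights optimized numerically'' at points chosen without justification (the cone vertex, the directrix meeting $S$). You also concede the bound may not be sharp enough for $H_{15},H_{16},H_{20}$.

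The asymptotic claim that the vertex volume is roughly a constant over $d_1$ is heuristic and not automatic. For instance, when $d_3=0<d_2$, take the natural valuation at the vertex, namely $\mathrm{ord}_E$ with $E$ the exceptional divisor of blowing up $C_0=\{x_1=x_2=0\}$. It gives
\[
2\,\widehat{\mathrm{vol}}\big(o,\mathbb{F}',\tfrac12S'\big)\leqslant 2\big(2-\tfrac m2\big)^3(d_1+d_2),\qquad m=\mathrm{mult}_{C_0}S .
\]
This bound grows linearly with the degree.

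\textbf{The missing idea} is the structural fact from \cite[Table~1]{ChPrSh}: in every listed family, $V$ is singular along a whole curve, not just at isolated special points. At a general point $P$ of this curve, $V$ is analytically
\[
(\mathbb{C}^2/\mathfrak{G})\times\mathbb{C}^1=\mathbb{C}^3/\mathfrak{G},
\]
where $\mathfrak{G}\subset\mathrm{SL}_2(\mathbb{C})$ acts freely in codimension one. The transversal Du Val type is the singularity of a general fiber $F$ of $\eta$, which the table records. The quotient formula of \cite{LiXu19} then gives the exact value $\widehat{\mathrm{vol}}(P,V)=27/|\mathfrak{G}|$. Hence K-semistability would force
\[
\frac{64}{|\mathfrak{G}|}\geqslant(-K_X)^3=2(d_1+d_2+d_3),
\]
that is, $|\mathfrak{G}|\,(d_1+d_2+d_3)\leqslant 32$, and this fails by inspection of the table. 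No optimization is needed and there is no margin issue.

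\textbf{Passing between $V$ and $X$.} Your claim that the two volumes coincide when $\phi$ is small is false in general. A small resolution of an ordinary double point already gives $16<27$. What holds, and suffices even when $\phi$ contracts the singular curve (as for $d_3=0$), is
\[
\widehat{\mathrm{vol}}(\phi(P),X)\leqslant\widehat{\mathrm{vol}}(P,V).
\]
Indeed, $\phi$ is crepant, so $A_X(v)=A_V(v)$ for every valuation $v$ centred at $P$. Moreover $\mathcal{O}_{X,\phi(P)}/\mathfrak{a}^X_m(v)\hookrightarrow\mathcal{O}_{V,P}/\mathfrak{a}^V_m(v)$, so $\mathrm{vol}_X(v)\leqslant\mathrm{vol}_V(v)$.
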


\begin{proof}
It follows from \cite[Table~1]{ChPrSh} that in each case considered in this proposition the 3-fold $V$ is singular along a curve.
Let $P$ be a general point of this curve. Then $(P\in V)$ is locally isomorphic to
$$
(O\in Y)\times\mathbb{C}^1,
$$
where $(O\in Y)$ is a (two-dimensional) Du Val singularity.
Recall that $(O\in Y)$ is locally isomorphic to a quotient singularity $\mathbb{C}^2/\mathfrak{G}$ for a finite subgroup $\mathfrak{G}\subset\mathrm{SL}_2(\mathbb{C})$, and one of the following possibilities holds:
\begin{itemize}
\item if $(O\in Y)$ is of type $\mathbb{A}_n$, then $\mathfrak{G}$ is a cyclic group of order $n+1$,
\item if $(O\in Y)$ is of type $\mathbb{D}_n$, then $\mathfrak{G}$ is a binary dihedral group of order $4(n-2)$,
\item if $(O\in Y)$ is of type $\mathbb{E}_6$, then $\mathfrak{G}$ is a binary tetrahedral group of order $24$,
\item if $(O\in Y)$ is of type $\mathbb{E}_7$, then $\mathfrak{G}$ is a binary octahedral group of order $48$,
\item if $(O\in Y)$ is of type $\mathbb{E}_8$, then $\mathfrak{G}$ is a binary icosahedral group of order $120$.
\end{itemize}
On the other hand, using \cite[Theorem 1.3.2]{LiXu19}, we see that
$$
\widehat{\mathrm{vol}}(P,V)=\frac{27}{|\mathfrak{G}|}.
$$
Hence, if $X$ is K-semistable, then
$$
\frac{64}{|\mathfrak{G}|}\geqslant (-K_X)^3.
$$
Now, comparing the type of Du Val singularity $(O\in Y)$ given in~\cite[Table 1]{ChPrSh} with the~anticanonical degree $(-K_X)^3$ listed in \cite[Theorem 1.6]{ChPrSh},
we see that $X$ is not K-semistable.
\end{proof}

Family $H_{14}$ can be dealt with differently. 
Let $X$ be a general member of Family $H_{14}$. Since K-semistability is an open property, it is enough to show that $X$ is K-unstable, for example by showing that $\beta(F)<0$, which is equivalent to $S_X(F)>1$. 
By \cite[Lemma~4.1]{Fujita2019}, we have
$$
S_X\big(F\big)=\frac{1}{12}\int\limits_0^{3}\mathrm{vol}\big(-K_X-uF\big)du\geqslant \frac{1}{6}\int\limits_0^{3}\mathrm{vol}\big(M-uL\big)du,
$$
where the volume in the last integral is computed on $\mathbb{F}(a_1,a_2,a_3)$. Using \cite[Corollary 7.7]{BlumJonsson}, we get
$$
\frac{1}{6}\int\limits_0^{3}\mathrm{vol}\big(M-uL\big)du=\frac{25}{24},
$$
so $S_X(F)>1$. Then $\beta(F)<0$ and $X$ is K-unstable.

\subsection{K-stable cases}\label{subsection-stable}

To prove parts $(\mathrm{1})$ and $(\mathrm{2})$ of the Main Theorem in a uniform way, 
we use a new technique developed in \cite{Fujita2024}.
To do this, we need the following result, which uses notations introduced 
in \cite{Fujita2024}.

\begin{proposition}
\label{proposition:toric-lsc}
Let $Y$ be an $n$-dimensional projective
$\mathbb{Q}$-factorial toric variety, and let
\[
Y_\bullet\colon Y=Y_0\triangleright\cdots\triangleright Y_{n-1}\triangleright Y_n
\]
be a~torus invariant complete plt flag over $Y$ in the~sense of 
\cite[Definition 5.2]{Fujita2024}.
In particular, we have $Y_{n-1}\simeq \mathbb{P}^1$, and $Y_n$ is a torus invariant point 
in $Y_{n-1}$.
Let $Y''_n\in Y_{n-1}\setminus\{Y_n\}$ be the other torus invariant point.
Then, for any point $Y'_n\in Y_{n-1}\setminus\{Y_n,Y''_n\}$,
\[
Y'_\bullet\colon Y=Y_0\triangleright\cdots\triangleright Y_{n-1}\triangleright Y'_n
\]
is also a~complete plt flag over $Y$. Moreover, we have
\[
S\left(H; Y_1\triangleright\cdots\triangleright Y_{n-1}\triangleright 
Y'_n\right)\leqslant S\left(H; Y_1\triangleright\cdots\triangleright 
Y_{n-1}\triangleright Y_n\right)
\]
for any big $\mathbb{Q}$-divisor $H$ on $Y$. (See \cite[Definition 4.7]{Fujita2024} for 
the definition of the above values.)
\end{proposition}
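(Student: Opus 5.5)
The plan is to compare the two flags only at their last step. The flags $Y_\bullet$ and $Y'_\bullet$ agree up to $Y_{n-1}$. So in the inductive definition of \cite[Definition 4.7]{Fujita2024} all refinements of the complete linear series of $H$ along $Y_1\triangleright\cdots\triangleright Y_{n-1}$ are the same for both flags. The only difference is the final valuation on $Y_{n-1}\simeq\mathbb{P}^1$: it is $\mathrm{ord}_{Y_n}$ for $Y_\bullet$ and $\mathrm{ord}_{Y'_n}$ for $Y'_\bullet$.

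First, $Y'_\bullet$ is again a complete plt flag. Indeed $Y'_n$ is a reduced point on the smooth curve $Y_{n-1}$, so $(Y_{n-1},Y'_n)$ is plt. All earlier conditions are inherited from $Y_\bullet$. Here I would check, with \cite[Definition 5.2]{Fujita2024} at hand, that the last step of the definition involves nothing beyond this.

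Next, describe the refined data on $Y_{n-1}$. Since every $Y_i$ is torus invariant, the successive refinements are equivariant for the torus $T$ of $Y$. Each filtration on $H^0(Y,mH)$ induced by $\mathrm{ord}_{Y_1}$, the restrictions, and the further filtrations are then compatible with the weight decomposition. Inductively, each graded piece of the $(n-1)$-fold refinement is a subspace $W\subset H^0(Y_{n-1},\mathcal{O}(d))$ with the following property: the one-dimensional torus $T_{n-1}\subset T$ acting on $Y_{n-1}\simeq\mathbb{P}^1$ with fixed points $Y_n,Y''_n$ preserves $W$. Hence $W$ is spanned by monomials in coordinates $(s_0:s_1)$ with $Y_n=\{s_1=0\}$.

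The value $S(H;Y_1\triangleright\cdots\triangleright Y_{n-1}\triangleright P)$ is a limit of normalized sums, over these graded pieces $W$, of the jumping numbers of $\mathrm{ord}_P$ on $W$. So it suffices to prove one comparison for every such $W$. Write $a_1(P)\le\cdots\le a_k(P)$ for the vanishing orders of $W$ at $P$. The required inequality is
\[
a_i(Y'_n)\leqslant a_i(Y_n)\qquad\text{for every } i.
\]
To prove it, let $\lambda\in T_{n-1}$ act. Since $W$ is $\lambda$-invariant, the vanishing sequence of $W$ at $\lambda\cdot Y'_n$ equals that at $Y'_n$. Moreover $\lambda\cdot Y'_n\to Y_n$ for a suitable one-parameter limit, because $Y'_n$ lies in the open orbit. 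The function $P\mapsto\dim\{s\in W:\mathrm{ord}_P s\geqslant m\}$ is upper semicontinuous in $P$. Hence for every $m$ this dimension at $Y_n$ is at least its constant value along the orbit, which gives exactly the termwise inequality. Alternatively one can argue directly: in the monomial basis at $Y'_n=(1:1)$, the exponents are $a_i(Y_n)$, and a Vandermonde-type argument gives the same bound. Summing over $W$ and $m$ and passing to the limit then gives the inequality of $S$-values.

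The main obstacle I expect is bookkeeping. One must verify from \cite[Definitions 4.7 and 5.2]{Fujita2024} that the refined graded linear series on $Y_{n-1}$ really is a torus-invariant multigraded family of subspaces, including the twisting by the divisors coming from the plt blow-ups at each step. One must also check that $S$ of the last step is computed purely from jumping numbers of $\mathrm{ord}_P$ on these pieces. Once that equivariance is established, the semicontinuity argument is uniform and elementary.
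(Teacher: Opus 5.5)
Your argument is correct, but it takes a different route from the paper.

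\textbf{The paper's route.} The paper never returns to the refinement definition of $S$. It takes a smooth adequate dominant $\{\gamma_k\}$ of $Y_\bullet$ made of toric morphisms and notes that it is also an adequate dominant of $Y'_\bullet$. Hence all the data of \cite[Definition 8.1]{Fujita2024} coincide for the two flags: $\mathbb{D}_k$, the negative parts $N_{l-1,k-1}$, the $g_{lk}$, and $v_k$ for $k\leqslant n-1$. The only term that changes is $v_n$, the multiplicity at the last point of the negative parts $N_{l-1,n-1}$. These are supported on $\{Y_n,Y''_n\}$, so $v'_n\leqslant v_n$, and the integral formula of \cite[Theorem 8.8]{Fujita2024} gives the inequality.

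\textbf{Your route.} You compare the two last-step filtrations at every finite level. You use that the graded pieces on $Y_{n-1}\simeq\mathbb{P}^1$ are torus-stable, hence monomial. You then use that the vanishing sequence of such a space at a point of the open orbit is termwise dominated by the one at a fixed point.

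\textbf{What each buys.} Your approach gives an elementary and more general statement: an inequality of the $S_m$ for every $m$, valid for any torus-stable multigraded linear series, with no Zariski decompositions or dominants. The price is the equivariance bookkeeping you flag. There it helps to note that the fractional parts of the torus-invariant $\mathbb{Q}$-divisors whose sections you restrict are supported on the two fixed points. So they can only raise orders at $Y_n$ and do not affect $Y'_n$, and your semicontinuity comparison survives the rounding.

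The paper's route gets this bookkeeping for free from Fujita's machinery, and it pins down the difference exactly. In the toric case each graded piece on $Y_{n-1}$ is spanned by the monomials whose exponents lie in an integer interval. So its vanishing sequence at $Y'_n$ is $0,1,\dots,k-1$ (with $k$ its dimension), and at $Y_n$ it is the same sequence shifted by the fixed-part multiplicity there. That shift is what $v_n$ records.

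One small correction: the torus acting on $Y_{n-1}$ is naturally a quotient of $T$, not a subtorus. So take a one-parameter subgroup of $T$ mapping onto it.
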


\begin{proof}
From the~definition of plt flags, $Y'_\bullet$ is a~complete plt flag over $Y$. Since 
$Y_\bullet$ is a~toric plt flag over $Y$, there is a~smooth adequate dominant
\[
\left\{\gamma_k\colon\bar{Y}_k\to \tilde{Y}_k\right\}_{0\leqslant k\leqslant n-1}
\]
of $Y_\bullet$ with respect to $H$ in the sense of \cite[Definition 8.5]{Fujita2024} 
such that all $\gamma_k$ are toric morphisms.
Note that, from the~definition of adequate dominant, $\{\gamma_k\}_{0\leqslant 
k\leqslant n-1}$ is also an adequate dominant of $Y'_\bullet$.
Moreover, the~values $\{d_{lk}\}_{1\leqslant l\leqslant k\leqslant n-1}$
(hence also $\{g_{lk}\}_{1\leqslant l\leqslant k\leqslant n-1}$) in 
\cite[Definition 7.2]{Fujita2024}
with respect to the~dominant $\{\gamma_k\}_k$ of $Y_\bullet$ and the~dominant 
$\{\gamma_k\}_k$ of $Y'_\bullet$ are the~same.
Let
\begin{eqnarray*}
&&\mathbb{D}_k\subset\mathbb{R}^k_{>0}, \quad v_k(y_1,\dots,y_{k-1}), \quad
N_{l-1,k-1}(y_1,\dots,y_l)\\
\Bigl(\text{respectively, }&&
\mathbb{D}'_k\subset\mathbb{R}^k_{>0}, \quad v'_k(y_1,\dots,y_{k-1}), \quad
N'_{l-1,k-1}(y_1,\dots,y_l)
\Bigr)
\end{eqnarray*}
for $1\leqslant l\leqslant k\leqslant n$ be as in \cite[Definition 8.1]{Fujita2024}
with respects to $Y_\bullet$ and $H$ (respectively, $Y'_\bullet$ and $H$).
From the~definition, we have
\[
\mathbb{D}'_k=\mathbb{D}_k,\quad N'_{l-1,k-1}(y_1,\dots,y_l)=N_{l-1,k-1}(y_1,\dots,y_l)
\]
for any $1\leqslant l\leqslant k\leqslant n$, and
\[
v'_k(y_1,\dots,y_{k-1})=v_k(y_1,\dots,y_{k-1})
\]
for any $1\leqslant k\leqslant n-1$.
Since the~support of $N_{l-1,n-1}(y_1,\dots,y_l)$ lies only on
$\{Y_n, Y''_n\}$, we have
\[
v'_n(y_1,\dots,y_{n-1})\leqslant v_n(y_1,\dots,y_{n-1}).
\]
By \cite[Theorem 8.8]{Fujita2024}, we have
\begin{eqnarray*}
&&S\left(H; Y_1\triangleright\cdots\triangleright Y_{n-1}\triangleright Y'_n\right)\\
&=&\frac{n!}{\operatorname{vol}(H)}
\int_{(y_1,\dots,y_n)\in\mathbb{D}_n}\left(y_n+v'_n(y_1,\dots,y_{n-1})
+\sum_{l=1}^{k-1}g_{l,k}\left(y_l+v_l(y_1,\dots,y_{l-1})\right)\right)d\vec{y}\\
&\leqslant&\frac{n!}{\operatorname{vol}(H)}
\int_{(y_1,\dots,y_n)\in\mathbb{D}_n}\left(y_n+v_n(y_1,\dots,y_{n-1})
+\sum_{l=1}^{k-1}g_{l,k}\left(y_l+v_l(y_1,\dots,y_{l-1})\right)\right)d\vec{y}\\
&=&S\left(H; Y_1\triangleright\cdots\triangleright Y_{n-1}\triangleright Y_n\right).
\end{eqnarray*}
Thus, we get the~assertion.
\end{proof}

Using Proposition \ref{proposition:toric-lsc} and the results in 
\cite[\S 5]{Fujita2024}, we obtain the next two lemmas. 

\begin{lemma}
\label{lemma:F1}
Take any $i_1,i_2,i_3$ with $\{i_1,i_2,i_3\}=\{1,2,3\}$, and let $l_{i_1}=\{t_2=x_{i_1}=0\}$.
Then
\[
S\left(M; F_2\triangleright l_{i_1}\right)=\frac{1}{4}
\left(1+\frac{d_{i_1}}{d_1+d_2+d_3}\right).
\]
Moreover, for any point $p\in l_{i_1}$, we have
\[
S\left(M; F_2\triangleright l_{i_1}\triangleright p\right)\leqslant
\frac{1}{4}\left(1+\frac{\max\{d_{i_2}, d_{i_3}\}}{d_1+d_2+d_3}\right).
\]
\end{lemma}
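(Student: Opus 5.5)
The plan is to reduce everything to the toric computation already used in Lemma~\ref{lemma:toric}, via the toric description of refined $S$-invariants in \cite[\S 5]{Fujita2024} (cf. \cite[Corollary 5.10]{Fujita2024}). Then Proposition~\ref{proposition:toric-lsc} passes from torus invariant points of $l_{i_1}$ to arbitrary points.

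\textbf{Step 1: the flag $F_2\triangleright l_{i_1}$ and its first invariant.} We use the fan $\Sigma$ and the moment polytope $P\subset M^0_\mathbb{R}$ of $M$ from the proof of Lemma~\ref{lemma:toric}.
\begin{itemize}
\item The fiber $F_2=V(u_2)$ corresponds to the facet $\{y_3=0\}$ of $P$. This facet is the standard triangle $\{y_1,y_2\geqslant 0,\ y_1+y_2\leqslant 1\}$, i.e.\ the moment polytope of $M|_{F_2}=\mathcal{O}_{\mathbb{P}^2}(1)$.
\item The curve $l_{i_1}=D_{i_1}\cap F_2$ is the torus invariant line of $F_2\simeq\mathbb{P}^2$ corresponding to the edge $\{y_{i_1}=0\}$ of this triangle, where we set $y_3':=1-y_1-y_2$ to play the role of $y_3$ when $i_1=3$.
\item All members are smooth, $F_2$ and $l_{i_1}$ are Cartier on the ambient varieties, and the intersections are transversal. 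Hence $\mathbb{F}\triangleright F_2\triangleright l_{i_1}\triangleright p$ is a torus invariant complete plt flag for any torus invariant $p\in l_{i_1}$, with no different terms.
\end{itemize}
By the toric formula of \cite[\S 5]{Fujita2024}, each successive $S$-invariant of a toric flag is computed as $\frac{3!}{\mathrm{vol}(M)}\int_P\lambda\,d\vec y$. Here $\lambda$ is the affine function on $P$ vanishing on the face defining the relevant torus invariant divisor, and the integral is over the same region as in Lemma~\ref{lemma:toric}. Concretely, we will use the coordinates adapted to the flag: first the distance to the facet of $F_2$, then to the facet of $l_{i_1}$, then to the facet cutting out $p$. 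Since the second coordinate is $y_{i_1}$, we get
\[
S\bigl(M;F_2\triangleright l_{i_1}\bigr)=\frac{3!}{d_1+d_2+d_3}\int_P y_{i_1}\,d\vec y=S(M;D_{i_1})=\frac14\Bigl(1+\frac{d_{i_1}}{d_1+d_2+d_3}\Bigr)
\]
by Lemma~\ref{lemma:toric}.

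\textbf{Step 2: torus invariant points of $l_{i_1}$.} The two torus invariant points of $l_{i_1}$ are $p_2=l_{i_1}\cap D_{i_2}$ and $p_3=l_{i_1}\cap D_{i_3}$. For $p=p_k$ the last coordinate is $y_{i_k}$, so the same integral gives
\[
S\bigl(M;F_2\triangleright l_{i_1}\triangleright p_k\bigr)=S(M;D_{i_k})=\frac14\Bigl(1+\frac{d_{i_k}}{d_1+d_2+d_3}\Bigr),\qquad k\in\{2,3\},
\]
which is at most the claimed bound.

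\textbf{Step 3: non-invariant points.} For a point $p\in l_{i_1}\setminus\{p_2,p_3\}$ we apply Proposition~\ref{proposition:toric-lsc} with $Y=\mathbb{F}$, $Y_1=F_2$, $Y_2=l_{i_1}\simeq\mathbb{P}^1$, $Y_3=p_2$, $Y_3''=p_3$ and $Y_3'=p$. This gives
\[
S\bigl(M;F_2\triangleright l_{i_1}\triangleright p\bigr)\leqslant S\bigl(M;F_2\triangleright l_{i_1}\triangleright p_2\bigr),
\]
and the right-hand side is bounded by Step~2. In fact this yields the bound with $\min$ instead of $\max$, by choosing the better invariant endpoint, but the stated $\max$ form suffices.

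\textbf{Main obstacle.} The main point needing care is Step~1: checking that the toric formula of \cite[\S 5]{Fujita2024} for refined invariants really reduces to these plain polytope integrals, with the correct normalization and affine coordinates for each step of the flag. In particular, one must confirm that restricting $M$ to $F_2$ and then to $l_{i_1}$ produces no extra negative-part or different corrections. Because $F_2$ is a fiber and $l_{i_1}$ is a line in $\mathbb{P}^2$, I expect these corrections to be trivial, so that the refined invariants coincide with the first-moment integrals of $P$ in the corresponding coordinates.
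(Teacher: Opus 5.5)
Your proposal is correct and is essentially the paper's proof. The paper also uses \cite[Theorem 5.12]{Fujita2024} (the refined-flag statement, rather than \cite[Corollary 5.10]{Fujita2024}) to identify $S(M;F_2\triangleright l_{i_1})$ with $S(M;D_{i_1})$, and the invariants at the two torus invariant points of $l_{i_1}$ with $S(M;D_{i_2})$ and $S(M;D_{i_3})$; that theorem is exactly what settles the ``no correction term'' issue you flag as the main obstacle, and the paper then concludes with Lemma~\ref{lemma:toric} and Proposition~\ref{proposition:toric-lsc}, as you do.
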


\begin{proof}
We only consider the~case $i_1=3$. Let $p_1=\{t_2=x_2=x_3=0\}$ and 
$p_2=\{t_2=x_1=x_3=0\}$.
Then, in the~terminology of \cite[Definition 5.2]{Fujita2024},
the complete flag
\[
X\triangleright F_2\triangleright l_3\triangleright p_2
\]
is associated with
\[
v_1=u_2,\quad v_2=\pi_1(e_3), \quad v_3=\pi_2\circ\pi_1(e_1),
\]
where $\pi_k\colon N^{k-1}\to N^k=N^{k-1}/\mathbb{Z} v_k$ are as in 
\cite[Definition 5.1]{Fujita2024}.
By \cite[Theorem 5.12]{Fujita2024}, we have
\begin{align*}
S\left(M; F_2\triangleright l_3\right)&=S\left(M; D_3\right),\\
S\left(M; F_2\triangleright l_3\triangleright p_2\right)&=S\left(M; D_1\right),\\
S\left(M; F_2\triangleright l_3\triangleright p_1\right)&=S\left(M; D_2\right).
\end{align*}
Thus, the~assertion follows from Lemma~\ref{lemma:toric} and 
Proposition~\ref{proposition:toric-lsc}.
\end{proof}

\begin{lemma}
\label{lemma:F2}
Fix coprime positive integers $a_1,a_2$,
let $\tilde{F}_2\to F_2$ be the toric weighted blowup of the point $\{x_1=x_2=0\}$ 
along local coordinates $(x_1,x_2)$ with ${\rm wt}(x_1,x_2)=(a_1,a_2)$,
and let $E$ be the~exceptional curve of this blowup.
Then
\[
S\left(M; F_2\triangleright E\right)=\frac{1}{4}\left(a_1+a_2+
\frac{a_1d_1+a_2d_2}{d_1+d_2+d_3}\right).
\]
Moreover, for any point $q\in E$, we have
\[
S\left(M; F_2\triangleright E\triangleright q\right)\leqslant
\max\left\{\frac{1}{4a_2}\left(1+\frac{d_1}{d_1+d_2+d_3}\right),\quad
\frac{1}{4a_1}\left(1+\frac{d_2}{d_1+d_2+d_3}\right)\right\}.
\]
\end{lemma}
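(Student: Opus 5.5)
The plan is to copy the argument of Lemma~\ref{lemma:F1}: encode the flag $F_2\triangleright E\triangleright q$ torically, use \cite[Theorem 5.12]{Fujita2024} to turn the $S$-invariants into integrals of linear functions over the moment polytope of $M$, and read off their values from Lemma~\ref{lemma:toric}. Proposition~\ref{proposition:toric-lsc} then reduces the bound for an arbitrary $q\in E$ to the two torus invariant points of $E$.

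\emph{Step 1: the flag in the language of \cite[Definition 5.2]{Fujita2024}.} The fiber $F_2=V(u_2)$ is toric, so I set $v_1=u_2$. Let $\pi_1\colon N^0\to N^1=N^0/\mathbb{Z}u_2\simeq\mathbb{Z}^2$ be the quotient map. The point $\{x_1=x_2=0\}\in F_2$ corresponds to the cone spanned by $\pi_1(e_1)$ and $\pi_1(e_2)$. Since $\gcd(a_1,a_2)=1$, the weighted blowup with weights $(a_1,a_2)$ corresponds to the primitive vector
\[
v_2=a_1\pi_1(e_1)+a_2\pi_1(e_2),
\]
and $E=V(v_2)$. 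Write $\pi_2\colon N^1\to N^2=N^1/\mathbb{Z}v_2$. The isomorphism $N^2\simeq\mathbb{Z}$ is induced by $(x,y)\mapsto a_2x-a_1y$. Under it, $\pi_2\pi_1(e_1)=a_2$ and $\pi_2\pi_1(e_2)=-a_1$. Hence the two torus invariant points $q_1,q_2\in E$ correspond to the primitive vectors
\[
v_3=\tfrac{1}{a_2}\pi_2\pi_1(e_1),\qquad v_3'=\tfrac{1}{a_1}\pi_2\pi_1(e_2).
\]
These are the points of $E$ lying on the strict transforms of $\{x_2=0\}$ and $\{x_1=0\}$ respectively, possibly with the labels swapped.

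\emph{Step 2: compute the $S$-invariants.} By \cite[Theorem 5.12]{Fujita2024}, $S(M;F_2\triangleright E)$ equals the $S$-invariant of the toric valuation on $\mathbb{F}$ given by $a_1e_1+a_2e_2$. On the moment polytope, $e_1$ and $e_2$ pair to the coordinates $y_1$ and $y_2$, and both attain minimum $0$. So this $S$-invariant is the average of $a_1y_1+a_2y_2$ over the polytope, which equals
\[
a_1S(M;D_1)+a_2S(M;D_2)=\frac14\left(a_1+a_2+\frac{a_1d_1+a_2d_2}{d_1+d_2+d_3}\right)
\]
by Lemma~\ref{lemma:toric}. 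In the same way, the vectors $v_3$ and $v_3'$ give
\[
S(M;F_2\triangleright E\triangleright q_1)=\frac{1}{a_2}S(M;D_1),\qquad S(M;F_2\triangleright E\triangleright q_2)=\frac{1}{a_1}S(M;D_2),
\]
and these are exactly the two quantities inside the maximum. For any other point $q\in E\setminus\{q_1,q_2\}$, Proposition~\ref{proposition:toric-lsc} gives
\[
S(M;F_2\triangleright E\triangleright q)\leqslant S(M;F_2\triangleright E\triangleright q_1).
\]
Together these give the claimed bound for every $q\in E$.

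\emph{Main obstacle.} The delicate step is applying \cite[Theorem 5.12]{Fujita2024} correctly. The flag $F_2\triangleright E$ lives on the weighted blowup $\tilde F_2$, not on $F_2$ itself, so one must check that it is still a torus invariant complete plt flag. One must also check that the last step of the flag produces the rescaled vector $\frac{1}{a_2}\pi_2\pi_1(e_1)$ rather than $\pi_2\pi_1(e_1)$; the factors $\frac{1}{a_2}$ and $\frac{1}{a_1}$ in the statement come precisely from this rescaling. I would verify the lattice identification $N^2\simeq\mathbb{Z}$ carefully, and then check that the formula reduces to Lemma~\ref{lemma:F1} when $(a_1,a_2)=(1,1)$. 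In that case $E$ plays the role of $l_3$ after blowing up, and the values $\frac14\big(2+\frac{d_1+d_2}{d_1+d_2+d_3}\big)=S(M;D_1)+S(M;D_2)$ serve as a sanity check.
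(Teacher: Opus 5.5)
Your proposal is correct and is essentially the paper's proof. It uses the same toric data $v_1=u_2$, $v_2=a_1\pi_1(e_1)+a_2\pi_1(e_2)$, $v_3=\frac{1}{a_2}\pi_2\circ\pi_1(e_1)$, the same three identities from \cite[Theorem 5.12]{Fujita2024} combined with Lemma~\ref{lemma:toric}, and Proposition~\ref{proposition:toric-lsc} for the non-invariant points of $E$. The only flaw is your closing ``sanity check'': for $(a_1,a_2)=(1,1)$ the curve $E$ is the exceptional curve of the ordinary blowup of a point of $F_2\simeq\mathbb{P}^2$, not the line $l_3$, and $S(M;D_1)+S(M;D_2)>S(M;D_3)$, so the formula does not reduce to Lemma~\ref{lemma:F1}; this does not affect the argument.
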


\begin{proof}
For $i\in\{1,2\}$, let $q_i$ be the intersection point of the curve $E$ and the~strict 
transform of the~curve $\{t_2=x_i=0\}$.
Then, in the~terminology of \cite[Definition 5.2]{Fujita2024},
the complete flag
\[
X\triangleright F_2\triangleright E\triangleright q_1
\]
is associated with
\[
v_1=u_2,\quad v_2=a_1\pi_1(e_1)+a_2\pi_1(e_2), \quad v_3
=\frac{1}{a_2}\pi_2\circ\pi_1(e_1).
\]
Again, it follows from \cite[Theorem 5.12]{Fujita2024} that
\begin{align*}
S\left(M; F_2\triangleright E\right)&=a_1S\left(M; D_1\right)+a_2S\left(M; D_2\right),\\
S\left(M; F_2\triangleright E\triangleright q_1\right)
&=\frac{S\left(M; D_1\right)}{a_2},\\
S\left(M; F_2\triangleright E\triangleright q_2\right)&=\frac{S\left(M; D_2\right)}{a_1}.
\end{align*}
Thus, the~assertion follows from Lemma~\ref{lemma:toric} and 
Proposition~\ref{proposition:toric-lsc}.
\end{proof}

Now, we are ready to prove the following proposition, which implies part $(\mathrm{1})$ of 
Main Theorem, since K-stability is an open property \cite[Theorem 1.1]{BlumLiuXu}.

\begin{proposition}
\label{proposition:marseille}
Suppose that one of the following six cases holds:
\begin{itemize}
\item $X$ is contained in Family $H_5$ and 
$S=\{(t_1^6+t_2^6)x_1^4+x_1x_3^3+t_1t_2x_2^4+x_2^2x_3^2=0\}$;
\item $X$ is contained in Family $H_7$ and 
$S=\{(t_1^4+t_2^4)x_1^4+t_1^2t_2^2x_2^4+x_1^2x_3^2+x_2^2x_3^2=0\}$;
\item $X$ is contained in Family $H_8$ and 
$S=\{(t_1^2+t_2^2)x_1^4+t_1t_2x_2^4+x_1^2x_3^2+x_2^2x_3^2=0\}$;
\item $X$ is contained in Family $H_{11}$ and 
$S=\{(t_1^8+t_2^8)x_1^4+t_1t_2x_1x_3^2+x_1x_2x_3^2+x_2^4=0\}$;
\item $X$ is contained in Family $H_{12}$ and 
$S=\{x_1\left((t_1^6+t_2^6)x_1^3+x_2^3+x_3^3\right)=0\}$;
\item $X$ is contained in Family $H_{13}$ and 
$S=\{(t_1^6+t_2^6)x_1^4+x_1^2x_3^2+t_1t_2x_2^4+x_2^3x_3=0\}$.
\end{itemize}
Then $X$ is K-stable.
\end{proposition}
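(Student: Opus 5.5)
We will apply Lemma~\ref{lemma:diagram}(2) with a finite group $G$ and the fiber $F_0=F_2=\{t_2=0\}$. In each of the six cases the equation of $S$ involves $t_1,t_2$ only through expressions such as $t_1^k+t_2^k$ and $t_1t_2$. So $S$ should be invariant under the involution $(t_1:t_2)\mapsto(t_2:t_1)$ and under $(t_1:t_2)\mapsto(\zeta t_1:\zeta^{-1}t_2)$ for a suitable root of unity $\zeta$, possibly after rescaling the fiber coordinates $x_1,x_2,x_3$ by appropriate characters. These generate a finite dihedral group $G$ acting on $\mathbb{P}^1$ without fixed points, and $G$ lifts to $V$ and hence to $X$. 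We will check this lift family by family, after reading $(d_1,d_2,d_3)$ off \cite[Table~1]{ChPrSh} from the degrees of the coefficients.

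Lemma~\ref{lemma:diagram}(2) then gives K-polystability. To upgrade this to K-stability, we will show that $\mathrm{Aut}(X)$ is finite. Any connected algebraic subgroup would act on the scroll preserving the projection and $S$, and a torus acting on the given explicit equations is excluded by inspecting the monomials.

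The main work is to verify that $\delta_p(\mathbb{F},\frac12 S;M)>1$ for every $p\in F_2$. For this we use the Abban--Zhuang type estimate of \cite{Fujita2024} along plt flags $\mathbb{F}\triangleright F_2\triangleright C\triangleright p$:
$$
\delta_p\geqslant\min\left\{\frac{A_{\mathbb{F},\frac12 S}(F_2)}{S(M;F_2)},\ \frac{A_{F_2,\frac12 S|_{F_2}}(C)}{S(M;F_2\triangleright C)},\ \frac{A_{C,\Delta_C}(p)}{S(M;F_2\triangleright C\triangleright p)}\right\},
$$
where $\Delta_C$ is the different together with the restriction of $\frac12 S$. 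The steps are as follows.

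\begin{enumerate}
\item The first term equals $1/S(M;F_2)$, because $F_2\not\subset S$. By Lemma~\ref{lemma:toric} this exceeds $1$ exactly when $d_1^2+d_2^2+d_3^2+d_1d_2+d_2d_3+d_3d_1<4(d_1+d_2+d_3)$, which is the complementary inequality to Corollary~\ref{corollary:kill-many} and should hold for these small triples.
\item For a point $p$ away from $\{x_1=x_2=0\}$, take $C=l_{i}$ for a suitable torus-invariant line $l_i$ through $p$ (or a general curve if needed). Lemma~\ref{lemma:F1} bounds both $S$-values by quantities of the form $\frac14(1+\frac{d}{d_1+d_2+d_3})\leqslant\frac12$. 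The log discrepancies stay $\geqslant 1$ or close to it, because $\frac12 S|_{F_2}$ is a reduced plane quartic with coefficient $\frac12$, and its local multiplicity along $l_i$ or at $p$ is controlled by the explicit equation $S|_{F_2}$, e.g.\ $x_1^4+x_1x_3^3+x_2^2x_3^2$ for $H_5$.
\item At the vertex-type point $\{x_1=x_2=0\}$ of $F_2$, or wherever $S|_{F_2}$ is badly singular (for instance a tacnode or higher contact with an invariant line), use Lemma~\ref{lemma:F2}. Choose the weights $(a_1,a_2)$ adapted to the Newton polygon of the local equation of $S|_{F_2}$, so that $A_{F_2,\frac12S|_{F_2}}(E)=a_1+a_2-\frac12\,\mathrm{wt}(S|_{F_2})$ is large compared with $S(M;F_2\triangleright E)$. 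Then bound the last term using the second estimate of Lemma~\ref{lemma:F2}, together with the different of $E$ (orbifold points of coefficients $1-\frac1{a_1},1-\frac1{a_2}$) and the intersection points of $E$ with the strict transform of $S|_{F_2}$.
\end{enumerate}

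The main obstacle is step (3). For the singular points of $S|_{F_2}$, in particular the points on the lines $x_i=0$ where $S|_{F_2}$ is tangent or singular (e.g.\ $x_2^3x_3$ in $H_{13}$, $x_1x_3^2$ terms in $H_{11}$, and the reducible $S$ in $H_{12}$ containing $D_1$), the weights must be chosen so that all three ratios exceed $1$ at once. This is where the generic choice of $S$ in the statement is used. I would first try the weights dictated by the lowest-weight monomials of the local equation, and fall back to the simple blowup $a_1=a_2=1$ at points where $S|_{F_2}$ has multiplicity at most $2$.
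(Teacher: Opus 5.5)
Your skeleton is the paper's: finiteness of $\mathrm{Aut}(X)$, a dihedral $G$ acting on $\mathbb{P}^1$ without fixed points, Lemma~\ref{lemma:diagram}(2), and Abban--Zhuang along toric flags whose $S$-invariants come from Lemmas~\ref{lemma:F1} and~\ref{lemma:F2}. As written, however, the proposal has concrete gaps.

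\textbf{The choice of fiber.} You take $F_0=F_2=\{t_2=0\}$ literally, and this fiber is special. For $H_5,H_7,H_8$ the coefficient of $x_2^4$ vanishes there. So $S|_{F_2}$ is $x_1^4+x_1x_3^3+x_2^2x_3^2$ (the very curve you wrote down), respectively $x_1^4+x_1^2x_3^2+x_2^2x_3^2$. Each of these has an $A_3$ tacnode at $\{x_1=x_3=0\}$. At a double point, every line flag gives $A_{\mathbb{F},\frac12 S}(F_2\triangleright l\triangleright p)\leqslant 0$. Moreover, Lemma~\ref{lemma:F2} only covers weighted blowups of $\{x_1=x_2=0\}$ in the coordinates $(x_1,x_2)$. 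You would therefore need a new toric computation. A $(1,2)$-blowup in $(x_1,x_3)$ appears to work, with $S=S(M;D_1)+2S(M;D_3)<1=A$, but this is not in your plan.

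The paper avoids the issue. Lemma~\ref{lemma:diagram}(2) needs only one fiber, so it takes a general $F_0$. There $S|_{F_0}$ is smooth away from $p_3=\{x_1=x_2=0\}$, or equals $l_1+C'$ for $H_{12}$. The $S(M;\cdot)$-values depend only on $(\mathbb{F},M)$ and the flag, not on $S$. So they are computed after moving $F_0$ to $F_2$ by an automorphism, while the $A$-values use $S|_{F_0}$.

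\textbf{Step (2) fails as stated.} A general point of $F_0$ lies on no torus-invariant line. Also, invariant lines through $p$ can be tangent to $S|_{F_0}$; for example $l_1$ is tangent at $p_3$ for $H_5$, which gives $A\leqslant 0$.

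The missing idea is that the scroll automorphisms $x_3\mapsto x_3+\alpha(t)x_1+\beta(t)x_2$, with $\deg\alpha=d_1-d_3$ and $\deg\beta=d_2-d_3$, carry any line of $F_0$ not through $p_3$ onto $l_3$. So for $p\neq p_3$ take a general line $l\ni p$, which is transverse to $S|_{F_0}$. Lemma~\ref{lemma:F1} then gives $S$-values $S(M;D_3)$ and $\leqslant S(M;D_1)<\frac12$, against $A$-values $1$, $1$, $\geqslant\frac12$. This is also why $p_3$ must be treated separately.

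\textbf{Step (3) is left as a search.} The paper's choices are as follows.
\begin{itemize}
\item For $H_5,H_7,H_8,H_{11}$: weights $(1,1)$ at $p_3$, with $A(E)=\frac32,1,1,1$ respectively.
\item For $H_{13}$: weights $(3,2)$ at the cusp, with $A(E)=2$ and $A\geqslant\frac13$ on $E$.
\item For $H_{12}$: no blowup at all. For $p\in l_1$ it uses the flag $F_0\triangleright l_1\triangleright p$, with $A=\frac12$ and $A\geqslant\frac12$ against $S=\frac25$ and $S\leqslant\frac{3}{10}$.
\end{itemize}

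Two smaller points. Finiteness of $\mathrm{Aut}(X)$ also requires excluding $\mathbb{G}_a$-actions, not only tori. And the surfaces in the statement are specific, not generic; what matters is that the $S$-invariants do not depend on $S$.
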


\begin{proof}
Arguing as in \cite{CheltsovShramovPrzyjalkowski} and using \cite{CheltsovProkhorov}, it is not difficult to see that the group $\mathrm{Aut}(X)$ is finite. Thus, in order to prove that $X$ is K-stable, it is enough to show that it is K-polystable.
Observe that, in each case, there is a dihedral group $G$ acting equivariantly  on $\left(\mathbb{F},\frac{1}{2}S\right)\to\mathbb{P}^1$ and the action of $G$ 
to $\mathbb{P}^1$ has no fixed points. 
For example, if $X$ is contained in Family $H_5$, then $G$ is taken to be the group 
generated by the following transformations:
\begin{align*}
(x_1:x_2:x_3; t_1:t_2)&\mapsto(x_1:x_2:x_3; t_2:t_1),\\
(x_1:x_2:x_3; t_1:t_2)&\mapsto\left(x_1:x_2:x_3; \omega_6t_1:\omega_6^{-1}t_2\right), 
\end{align*}
where $\omega_6$ is a primitive sixth root of unity. 
Take a general fiber $F_0\subset\mathbb{F}$ of $\mathbb{F}\to\mathbb{P}^1$. 
By Lemma \ref{lemma:diagram} (2), 
it is enough to show the~inequality
$$
\delta_p\left(\mathbb{F},\frac{1}{2}S; M\right)>1
$$
for every point $p\in F_0$.

First, we consider the cases when $X$ is contained in Families 
$H_5$, $H_7$, $H_8$, $H_{11}$, $H_{13}$.
In these cases, the~restriction $S|_{F_0}$ is an irreducible plane quartic curve in 
$F_0\simeq\mathbb{P}^2$, which is smooth away from the point $p_3=\{x_1=x_2=0\}\cap F_0$.
The~singularity of the curve $S|_{F_0}$ at the point $p_3$ is:
\begin{itemize}
\item a smooth point in the case when $X$ is contained in Family $H_5$,
\item a nodal singularity in the case when $X$ is contained in one of Families 
$H_7$, $H_8$, $H_{11}$,
\item a simple cusp in the case when $X$ is contained in Family $H_{13}$.
\end{itemize}

Suppose that $p\neq p_3$. Take a~general line $l\subset F_0\simeq\mathbb{P}^2$ 
that contains $p$.
We can choose $l$ such that it intersects the curve $S|_{F_0}$ transversely.
Note that the values 
\[
S\left(M; F_0\right), \quad S\left(M; F_0\triangleright l\right),\quad 
S\left(M; F_0\triangleright l\triangleright p\right)
\]
do not depend on the~choice of the surface $S\subset\mathbb{F}$. Moreover, there is 
an automorphism of the scroll $\mathbb{F}$ such that
the fiber $F_0$ is mapped to $F_2$, and $l$ is mapped to the line $l_3$ in the notation 
of Lemma~\ref{lemma:F1}.
Thus, by Lemma \ref{lemma:F1}, we have
\begin{eqnarray*}
S\left(M; F_0\right)&=&\frac{\sum\limits_{1\leqslant i_1\leqslant i_2\leqslant 3}
d_{i_1}d_{i_2}}{d_1+d_2+d_3},\\
S\left(M; F_0\triangleright l\right)&=&
\frac{1}{4}\left(1+\frac{d_3}{d_1+d_2+d_3}\right),\\
S\left(M; F_0\triangleright l\triangleright p\right)&\leqslant&
\frac{1}{4}\left(1+\frac{d_1}{d_1+d_2+d_3}\right).
\end{eqnarray*}
On the other hand, in the~notation of \cite[Definition 2.10]{Fujita2024}, we have
\[
A_{\mathbb{F},\frac{1}{2}S}(F_0)=1, \quad A_{\mathbb{F},
\frac{1}{2}S}(F_0\triangleright l)=1,\quad
A_{\mathbb{F},\frac{1}{2}S}(F_0\triangleright l\triangleright p)\geqslant\frac{1}{2}. 
\]
Thus, it follows from \cite[Theorem 3.2]{AbbanZhuang} that
\[\delta_p\left(\mathbb{F},\frac{1}{2}S; M\right)\geqslant\min\left\{
\frac{A_{\mathbb{F},\frac{1}{2}S}(F_0)}{S\left(M; F_0\right)}, \quad
\frac{A_{\mathbb{F},\frac{1}{2}S}(F_0\triangleright l)}{S\left(M;
F_0\triangleright l\right)}, \quad
\frac{A_{\mathbb{F},\frac{1}{2}S}(F_0\triangleright l\triangleright 
p)}{S\left(M;F_0\triangleright l\triangleright p\right)}\right\}>1
\]
in each case. Hence we may assume that $p=p_3$ from now on. Take the weighted blowup 
$\tilde{F}_0\to F_0$ at the point $p_3$ with weights $(a_1,a_2)$ with local
coordinates $(x_1,x_2)$ such that
\[
(a_1,a_2)=\begin{cases}
(1,1) & \text{if $X$ is contained in one of Families $H_5$, $H_7$, $H_8$, $H_{11}$}, \\
(3,2) & \text{if $X$ is contained in Family $H_{13}$}.
\end{cases}
\]
Let $E\subset\tilde{F}_0$ be the exceptional divisor of the blowup, and let
$q\in E$ be any closed point. We have
\begin{eqnarray*}
A_{\mathbb{F},\frac{1}{2}S}(F_0)&=&1, \\
A_{\mathbb{F},\frac{1}{2}S}(F_0\triangleright E)&=&\begin{cases}
\frac{3}{2} & \text{if $X$ is contained in Family $H_5$}, \\
1 & \text{if $X$ is contained in one of Families $H_7$, $H_8$, $H_{11}$}, \\
2 & \text{if $X$ is contained in Family $H_{13}$},
\end{cases}\\
A_{\mathbb{F},\frac{1}{2}S}(F_0\triangleright E\triangleright q)&\geqslant&\begin{cases}
\frac{1}{2} & \text{if $X$ is contained in one of Families 
$H_5$, $H_7$, $H_8$, $H_{11}$}, \\
\frac{1}{3} & \text{if $X$ is contained in Family $H_{13}$}.
\end{cases}
\end{eqnarray*}
On the other hand, after twisting by 
an automorphism of $\mathbb{F}$, similar to the~above arguments, we get
\begin{eqnarray*}
S\left(M; F_0\triangleright E\right)
&=&\frac{1}{4}\left(a_1+a_2+\frac{a_1d_1+a_2d_2}{d_1+d_2+d_3}\right),\\
S\left(M; F_0\triangleright E\triangleright q\right)
&\leqslant&
\max\left\{\frac{1}{4a_2}\left(1+\frac{d_1}{d_1+d_2+d_3}\right),\quad
\frac{1}{4a_1}\left(1+\frac{d_2}{d_1+d_2+d_3}\right)\right\}
\end{eqnarray*}
by Lemma \ref{lemma:F2}.
In each case, again by \cite[Theorem 3.2]{AbbanZhuang}, we have
$\delta_{p_3}\left(\mathbb{F},\frac{1}{2}S;M\right)>1$.

Now, we consider the~remaining case when $X$ is contained in Family $H_{12}$.
Then
$S|_{F_0}=l_1+C'$,
where $l_1$ is the line in $F_0\simeq\mathbb{P}^2$ that is cut out by $x_1=0$, and 
$C'$ is a~smooth plane cubic curve that transversely intersects $l_1$.
We consider the case $p\not\in l_1$. 
Let $l$ be a general line in $F_0\simeq\mathbb{P}^2$ passing through $p$.
Then, as before, we have 
\[
S\left(M; F_0\right)=\frac{9}{10}, \quad 
S\left(M; F_0\triangleright l\right)=\frac{3}{10},
\quad S\left(M; F_0\triangleright l\triangleright p\right)\leqslant\frac{2}{5}
\]
by Lemma \ref{lemma:F1}.
Moreover, we have 
\[
A_{\mathbb{F},\frac{1}{2}S}(F_0)=1, \quad 
A_{\mathbb{F},\frac{1}{2}S}(F_0\triangleright l)=1, \quad 
A_{\mathbb{F},\frac{1}{2}S}(F_0\triangleright l\triangleright p)\geqslant\frac{1}{2},
\]
so that $\delta_p\left(\mathbb{F},\frac{1}{2}S;M\right)>1$ by 
\cite[Theorem 3.2]{AbbanZhuang}.
Thus we may assume that $p\in l_1$.
Then we have 
\[
S\left(M; F_0\triangleright l_1\right)=\frac{2}{5},\quad 
S\left(M; F_0\triangleright l_1\triangleright p\right)\leqslant\frac{3}{10}
\]
by Lemma \ref{lemma:F1},
and
\[
A_{\mathbb{F},\frac{1}{2}S}(F_0\triangleright l_1)=\frac{1}{2}, \quad  
A_{\mathbb{F},\frac{1}{2}S}(F_0\triangleright l_1\triangleright p)\geqslant\frac{1}{2},
\]
so that $\delta_p\left(\mathbb{F},\frac{1}{2}S;M\right)>1$ by 
\cite[Theorem 3.2]{AbbanZhuang}. This completes the~proof of the proposition.
\end{proof}

\subsubsection{Alternative Approach for K-stable cases}\label{alternative-stable}
Certain cases above can be proven K-stable with more traditional methods. For example, Tian's criterion can be deployed provided that
$$
\alpha_G(X)>\frac{3}{4},
$$
where $G$ is a reductive subgroup in $\mathrm{Aut}(X)$ and
$$
\alpha_G(X):=\mathrm{sup}\left\{\lambda\in\mathbb{R}_{>0}\ \left|\ \aligned
&\text{the pair}\ \left(X, \frac{\lambda}{m} \mathcal{M}\right)\ \text{is log canonical for every }m\in\mathbb{Z}_{>0} \\
&\text{and for every $G$-invariant subsystem }\ \mathcal{M}\subset |-mK_{X}|\\
\endaligned\right.\right\}.
$$
Moreover, if the group $\mathrm{Aut}(X)$ is finite, the K-stability and K-polystability coincide as mentioned around the beginning of the proof of Proposition \ref{proposition:marseille}. The use of Tian's criterion is illustrated in the following example.

\begin{example}[Family $H_5$]
\label{example:H5}
Suppose that $d_1=2$, $d_2=1$, $d_3=0$, and
$$
S=\big\{(t_1^6+t_2^6)x_1^4+x_1x_3^3+t_1t_2x_2^4+x_2^2x_3^2=0\big\}\subset\mathbb{F}(2,1,0).
$$
Then $X$ is contained in Family $H_5$, 
\mbox{$(-K_X)^3=6$}, both $V$ and $S$ are smooth, and $\phi$ is a~small morphism that contracts a~smooth rational curve
to an~ordinary double point of the Fano 3-fold~$X$. 
Moreover, arguing as in \cite{CheltsovShramovPrzyjalkowski}, we see that  $\mathrm{Aut}(X)$ is finite.
Let $G\subset\mathrm{Aut}(V)$ be the~subgroup generated by the~Galois involution of $\pi$,
and the~lifts of the~automorphisms of $\mathbb{F}(2,1,0)$ given by
\begin{align*}
(x_1:x_2:x_3;t_1:t_2)&\mapsto (x_1:x_2:x_3;\omega_6t_1:\omega_6^5t_2),\\
(x_1:x_2:x_3;t_1:t_2)&\mapsto (x_1:x_2:x_3;t_2:t_1),\\
(x_1:x_2:x_3;t_1:t_2)&\mapsto (x_1:-x_2:x_3;t_1:t_2).
\end{align*}
where $\omega_6$ is a~primitive sixth root of unity.
Then we have the following  $G$-equivariant  diagram:
$$
\xymatrix{
&V\ar[ld]_{\eta}\ar[rd]^{\phi}&&U\ar[ld]_{\psi}\ar[rd]^{\nu}&\\
\mathbb{P}^1&&X&&Y}
$$
where $\psi$ is a~small resolution of $X$,
$Y$ is a~sextic hypersurface in $\mathbb{P}(1^3,2,3)$ that has one ordinary double point, and $\nu$ is the blowup of this point.
Hence, we can consider $G$ as a subgroup in $\mathrm{Aut}(X)$. Then $\alpha_G(X)=\alpha_G(V)$ by \cite[Lemma~1.47]{Book},
and $\alpha_G(V)\geqslant\alpha(F)$ by \cite[Corollary~1.56]{Book}, where $F$ is the generic fiber of $\eta$. 
On the other hand, it follows from \cite{CheltsovGAFA,CheltsovParkWonJEMS} that $\alpha(F)\geqslant\frac{5}{6}$,
because $|-K_{F}|$ does not contain curves with tachnodal singularities.
Hence, we conclude that $\alpha_G(X)\geqslant\frac{5}{6}$, and $X$ is K-stable.
\end{example}

Quite often we are unable to show that $\alpha_G(X)>\frac{3}{4}$, but we can show that
$\alpha_{G,Z}(X)>\frac{3}{4}$ for every irreducible $G$-invariant curve $Z\subset X$ that satisfies certain geometric conditions,
where $\alpha_{G,Z}(X)$ is the local analogue of the $\alpha$-invariant defined in \cite[\S~1.4]{Book}.
Together with equivariant valuative criterion and Abban--Zhuang theory \cite{AbbanZhuang,Book},
this enables us to show that $X$ is K-stable in many cases. Let us consider one example.

\begin{example}[Family $H_7$]
\label{example:H7}
Suppose that $d_1=2$, $d_2=2$, $d_3=0$, and
$$
S=\big\{(t_1^4+t_2^4)x_1^4+t_1^2t_2^2x_2^4+x_1x_2x_3^2=0\big\}\subset\mathbb{F}(2,2,0).
$$
Then $X$ is a hyperelliptic Fano 3-fold in Family $H_7$ with $(-K_X)^3=8$, the group $\mathrm{Aut}(X)$ is finite,
the surface $S$ is singular along the~curve $\{x_1=x_2=0\}\subset\mathbb{F}(2,2,0)$,
it has an ordinary double singularity at general point of this curve,
and $\phi$ contracts its preimage on $V$, which is $\mathrm{Sing}(V)$.
Let $G$ be the~subgroup in $\mathrm{Aut}(V)$ generated by the~Galois involution of the~double cover~$\pi$,
and the~lifts of the~automorphisms of $\mathbb{F}(2,2,0)$ given by
\begin{align*}
(x_1:x_2:x_3;t_1:t_2)&\mapsto (x_1:x_2:x_3;it_1:-it_2),\\
(x_1:x_2:x_3;t_1:t_2)&\mapsto (x_1:x_2:x_3;t_2:t_1),\\
(x_1:x_2:x_3;t_1:t_2)&\mapsto (ix_1:-ix_2:x_3;t_1:t_2).
\end{align*}
Then $\phi$ and $\eta$ are $G$-equivariant,
and we can consider $G$ as a~subgroup in $\mathrm{Aut}(X)$.
Let $C=\mathrm{Sing}(V)$. Then $C$ is a section of the~fibration~$\eta$,
and $F\cap C$ is an ordinary double singularity of the surface $F$.
Let~$\mathbf{E}$ be a~$G$-invariant prime divisor over $X$,
and let $Z$ be its center on $V$.
Then $\eta(Z)=\mathbb{P}_{t_1,t_2}^1$, since $G$ does not fix points in $\mathbb{P}^1_{t_1,t_2}$.
In particular, we have $Z\cap F\ne\varnothing$.
Let $P$ be a~point in $Z\cap F$.
Then \cite[Corollary 1.44]{Book} gives
$$
\frac{3}{4}\frac{A_X(\mathbf{E})}{S_X(\mathbf{E})}\geqslant\alpha_{G,Z}(X)\geqslant\alpha_P(F),
$$
where $\alpha_{P}(F)$ is the $\alpha$-function of the del Pezzo surface $F$ \cite{CheltsovParkWonJEMS, ParkWon,ParkWon2011}.
So, if $\beta(\mathbf{E})\leqslant 0$, then $\alpha_P(F)\leqslant \frac{3}{4}$,
so it follows from \cite{CheltsovParkWonJEMS, ParkWon,ParkWon2011} that one of the following cases holds:
\begin{enumerate}
\item either $P$ is the~singular point of the~surface $F$,
\item or $P$ is a~smooth point of $F$ and $|-K_F|$ contains a~tacknodal curve singular at $P$.
\end{enumerate}
In both cases, we see that $Z$ is a~curve.
Moreover, arguing as in the~proof of  \cite[Theorem 1.52]{Book}, we see that $Z$ must be  a~section of the~del Pezzo fibration $\eta$.
Therefore, if $\beta(\mathbf{E})\leqslant 0$, then $Z=C$, because $C$ is the unique $G$-invariant section of $\eta$.
We also have a commutative diagram
$$
\xymatrix{
Y\ar[d]_{f}\ar[rr]^{g}&&U\ar[d]^{\nu}\\
V\ar@{-->}[rr]&&\mathbb{P}^1_{x_1,x_2}}
$$
where $f$ is the blow of the curve $C$,
$g$ is a~contraction of the~$f$-exceptional surface to a~singular curve of the~3-fold $U$,
$\nu$ is a~fibration into del Pezzo surfaces of degree $4$,
and the dashed arrow is the composition of $\pi$ with the~map $\mathbb{F}(2,2,0)\dasharrow\mathbb{P}^1_{x_1,x_2}$ given by
\[
(x_1:x_2:x_3;t_1:t_2)\mapsto(x_1:x_2).
\]
Let $E$ be the~exceptional surface of the~morphism $f$,
and let $\widetilde{Z}$ be the center on $Y$ of the divisor~$\mathbf{E}$.
Then $Y$ is smooth, $E\cong\mathbb{P}^1\times\mathbb{P}^1$,
and $\widetilde{Z}$ is a~curve in $E$ such that $f(\widetilde{Z})=C$, since $\beta(E)>0$, because
$$
S_{X}(E)=\frac{1}{8}\int\limits_0^{1}\big(f^*(-K_V)-uE\big)^3du=\frac{1}{8}\int\limits_{0}^{1}8-8u^3du=\frac{3}{4}<1=A_X(E).
$$
Now, we apply Abban--Zhuang theory \cite{AbbanZhuang,Book} to show that $\beta(\mathbf{E})>0$.
Namely, following \cite{AbbanZhuang,Book}, let
$$
S\big(W^E_{\bullet,\bullet};\widetilde{Z}\big)=\frac{3}{(-K_X)^3}\int\limits_0^{1}\int\limits_0^{\infty}\mathrm{vol}\Big(\big(f^*(-K_V)-uE\big)\big\vert_{E}-v\widetilde{Z}\Big)dvdu.
$$
Note that value $S(W^E_{\bullet,\bullet};\widetilde{Z})$ in \cite{Book} is nothing but the value $S(-K_X; E\triangleright \widetilde{Z})$ in \cite{Fujita2024}. It follows from \cite{AbbanZhuang,Book} that
$$
\frac{A_X(\mathbf{E})}{S_X(\mathbf{E})}\geqslant\min\Bigg\{\frac{1}{S_X(E)},\frac{1}{S\big(W^E_{\bullet,\bullet};\widetilde{Z}\big)}\Bigg\}.
$$
But we already know that $S_X(E)=\frac{3}{4}$. Moreover, we compute
$$
S\big(W^E_{\bullet,\bullet};\widetilde{Z}\big)\leqslant
\frac{3}{(-K_X)^3}\int\limits_0^{1}\int\limits_0^{\infty}\mathrm{vol}\Big(\big(f^*(-K_V)-uE\big)\big\vert_{E}-v\mathbf{s}\Big)dvdu=\frac{3}{8}\int\limits_0^{1}\int\limits_0^{2u}4u(2u-v)dvdu=\frac{3}{4},
$$
where $\mathbf{s}$ is a curve in $E\cong\mathbb{P}^1\times\mathbb{P}^1$ such that $\mathbf{s}^2=0$ and $f(\mathbf{s})=C$.
So, $\beta(\mathbf{E})>0$, and $X$ is K-stable.
\end{example}

Similarly, one can produce symmetric K-stable singular members of Families $H_8,H_{11},H_{12},H_{13}$ and argue as above to prove their K-stability. We leave this for further investigation by future generations.

\subsection{K-polystable cases}\label{subsection-polystable}

Finally, we prove $(\mathrm{2})$ of the Main Theorem, which follows from the next proposition.

\begin{proposition}
\label{proposition:10-17}
Suppose that one of the following two cases holds:
\begin{itemize}
\item $X$ is contained in Family $H_{10}$ and $S=\{x_1(t_1x_2^3+t_2x_3^3)=0\}$;
\item $X$ is contained in Family $H_{17}$ and $S=\{x_1(x_2^3+x_3^3)=0\}$.
\end{itemize}
Then $X$ is K-polystable.
\end{proposition}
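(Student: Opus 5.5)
The plan is to show that both hypersurfaces are K-polystable complexity-one $\mathbb{T}$-varieties, rather than to apply Lemma~\ref{lemma:diagram}(2) directly. First I identify the scrolls by matching degrees. Since $x_i$ is a section of $M-d_iL$ and $S\sim 4M+2(2-d_1-d_2-d_3)L$, the shape of $S$ forces $d_2=d_3$. Comparing with $M^3=d_1+d_2+d_3=(-K_X)^3/2$, computed from \eqref{equation:H10} and \eqref{equation:H17}, gives $\mathbb{F}(3,0,0)$ for $H_{10}$ and $\mathbb{F}(4,0,0)$ for $H_{17}$. Lemma~\ref{lemma:toric} then gives $S(M;F_j)=\frac{3}{4}$ and $S(M;F_j)=1$ respectively.

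Lemma~\ref{lemma:diagram}(2) cannot be applied as it stands, for two reasons.
\begin{itemize}
\item For $H_{17}$, already $A(F_0)/S(M;F_0)=1$, so $\delta_p\leqslant 1$ at every point of $F_0$.
\item In both cases, on a fiber $F_0$ the curve $S|_{F_0}$ is $l_1$ plus three concurrent lines through $P=\{x_2=x_3=0\}$. The toric divisor $E_C$ over the section $C=\{x_2=x_3=0\}$ has $A_{\mathbb{F},\frac12 S}(E_C)=\frac12$. Since $E_C$ corresponds to the ray $e_2+e_3=-e_1$, its value is $S(M;E_C)=1-S(M;D_1)$, which equals $\frac12$ for $(3,0,0)$ and lies below $1$ for $(4,0,0)$.
\end{itemize}
So $\beta=0$ occurs for torus-invariant divisors, and the strict-inequality criterion cannot work. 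This is expected, because these divisors come from one-parameter subgroups of $\mathrm{Aut}(X)$.

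I will therefore use the description as hypersurfaces in weighted projective space and the theory of K-polystability of complexity-one $\mathbb{T}$-varieties \cite[\S~1.3]{Book}.
\begin{itemize}
\item For \eqref{equation:H10}, the monomials $w^2,z^3x,t^3y$ leave a $2$-dimensional torus $\mathbb{T}\subset\mathrm{Aut}(X)$. Its rational quotient is $X\dasharrow\mathbb{P}^1$, $(z^3x:t^3y)$, with special fibers over $0$, $\infty$ and $-1$, namely $\{x=0\},\{z=0\}$, then $\{y=0\},\{t=0\}$, then $\{w=0\}$.
\item For \eqref{equation:H17}, $\mathbb{T}$ scales $x,y$ independently and $(z,t,w)$ compatibly. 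The quotient is $(z:t)$, with special fibers $\{z=0\},\{t=0\},\{z+t=0\}$, each carrying $w=0$, while $x$ and $y$ are horizontal.
\end{itemize}
By the criterion in \cite{Book}, it suffices to prove two things. First, the Futaki character vanishes on $\mathbb{T}$. Second, $\beta(\mathbf{E})>0$ for every $\mathbb{T}$-invariant vertical prime divisor $\mathbf{E}\subset X$, that is, the irreducible components of the special fibers.

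For the Futaki character I use symmetry. For \eqref{equation:H10}, the involution $x\leftrightarrow y$, $z\leftrightarrow t$ normalizes $\mathbb{T}$ and kills one direction. The other direction is the weight vector of the $\mathbb{G}_m$ acting on $(z,t,w)$ against $(x,y)$, and there I compute $\beta$ of the corresponding toric-type divisor directly. Equivalently, I verify $A=S$ for $E_C$ above, which the computation $S(M;E_C)=\frac12=A(E_C)$ already does for $H_{10}$; I expect an analogous check to hold for $H_{17}$. For \eqref{equation:H17}, the symmetries $x\leftrightarrow y$ and the $\mathfrak{S}_3$ permuting $z,t,-z-t$ do the same job.

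For the vertical divisors I compute $S_X$ directly. Each is cut out by a single weighted coordinate, for example $\{x=0\}\sim_{\mathbb{Q}}\frac13(-K_X)$ in the $H_{10}$ case. I will compute $S_X$ via $\mathrm{vol}(-K_X-u\mathbf{E})$ on the weighted projective model, as in the $H_{14}$ computation, and check $S_X(\mathbf{E})<1=A_X(\mathbf{E})$. Where it is easier, I can instead pass through Lemma~\ref{lemma:diagram}(1) and the toric formulas of Lemma~\ref{lemma:toric} on $\mathbb{F}$, noting that vertical divisors on $X$ correspond to the $D_i$, the $F_j$, and the components of $S$ with $A=\frac12$.

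The main obstacle is getting the complexity-one combinatorics right: the divisorial polytope, and deciding which divisors over the singular points of $X$ are vertical. In particular, the components lying over the non-general fibers $\{w=0\}$ in the $H_{17}$ case may force a careful volume computation rather than a one-line toric formula. If this becomes unwieldy, my fallback is a refined Abban--Zhuang estimate in the framework of \cite{Fujita2024}. I would restrict to $G$-invariant divisors not induced by $\mathbb{T}$, using $G\supset\mathbb{T}\rtimes(\text{finite})$ and Zhuang's criterion \cite{Zhuang}, and bound them through flags on $F_0$ refined at $P$ by Lemma~\ref{lemma:F2}.
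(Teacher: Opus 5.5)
Your proposal works and uses the same key input as the paper, the complexity-one criterion \cite[Theorem~1.31]{Book}. The difference is that you apply it to $X$ itself, whereas the paper first passes (via Liu--Zhu) to the log Fano pair $(\mathbb{F}',\frac12S')$ and reads every $S$-invariant off Lemma~\ref{lemma:toric}.

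\begin{itemize}
\item In the paper, the Futaki character is killed by computing $\beta=0$ on two independent torus directions: $D_1$ and the $(1,3)$-weighted blowup $\mathbb{E}$ for $H_{10}$, and $D_1$ and $F_1$ for $H_{17}$. The vertical divisors on $\mathbb{F}$ are then bounded by $S(M;F_1)$ and $S(M;D_3)$.
\item Working on $X$ gives you a cleaner vertical check, and you do not need a log version of the criterion. Each $\mathbb{T}$-invariant vertical prime divisor on $X$ is $\mathbb{Q}$-linearly equivalent to $c(-K_X)$, so $S_X=\frac{1}{4c}$.
\item For $H_{10}$ one has $c\geqslant\frac13$, with equality for $\{x=0\}$ and $\{y=0\}$. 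For $H_{17}$ one has $c\geqslant\frac12$, with equality for $\{z=w=0\}$, $\{t=w=0\}$ and $\{z+t=w=0\}$.
\item The price is a transfer step. You replace one of the paper's two Futaki computations by a symmetry, and you must move $\beta(E_C)=0$ from the pair to $X$. This is fine: the divisor over $\{z=t=w=0\}$ with weights $(2,2,3)$ on $(z,t,w)$ is ramified over $E_C$, and it has $A_X=S_X=1$.
\end{itemize}

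There are a few points to tighten.

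\begin{itemize}
\item The general fibres of the quotient map are also $\mathbb{T}$-invariant vertical prime divisors. They are harmless, since $c\geqslant 1$ for them, but they belong in your list.
\item For $H_{17}$, the $\mathfrak{S}_3$ acting on $(z,t)$ commutes with $\mathbb{T}$, so it does nothing to the Futaki character. Only $x\leftrightarrow y$ helps. The surviving direction is the central one (scalars on $(x,y)$, equivalently weights $(2,2,3)$ on $(z,t,w)$), which is exactly the $E_C$ direction.
\item For that direction, your ``lies below $1$'' should read ``equals $\frac12$''. Since $d_2=d_3=0$, Lemma~\ref{lemma:toric} gives $S(M;D_1)=\frac12$. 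Hence $S(M;E_C)=1-S(M;D_1)=\frac12=A_{\mathbb{F},\frac12S}(E_C)$ for both $\mathbb{F}(3,0,0)$ and $\mathbb{F}(4,0,0)$, so the check you only ``expect'' does hold.
\item Your worry about divisors over the singular points of $X$ is unnecessary. The criterion only involves vertical prime divisors on $X$.
\end{itemize}
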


\begin{proof}
As in the proof of Proposition~\ref{proposition:marseille},
we see that the Fano 3-fold $X$ is K-polystable if and only if the~anti-canonical model 
$(\mathbb{F}',\frac{1}{2}S')$ of the~pair $(\mathbb{F},\frac{1}{2}S)$ is K-polystable.
On the other hand, the surface $S$ is reducible and can be decomposed into 
$S=S''+D_1$, and Lemma~\ref{lemma:F1} gives
\[
A_{\mathbb{F},\frac{1}{2}S}(D_1)\leqslant\frac{1}{2}=S\left(M;D_1\right).
\]
This implies that the log Fano pair $(\mathbb{F}',\frac{1}{2}S')$ is not~\mbox{K-stable}.

Suppose that $X$ is contained in Family $H_{10}$ and $S=\{x_1(t_0x_2^3+t_1x_3^3)=0\}$.
Then $\operatorname{Aut}\left(\mathbb{F},\frac{1}{2}S\right)$ contains a two-dimensional 
torus $\mathbb{T}\simeq\mathbb{G}_m^2$ that acts on $\mathbb{F}$ as follows:
\begin{eqnarray*}
\left(x_1:x_2:x_3; t_1:t_2\right) &\mapsto&
\left(\lambda_1x_1:x_2:x_3; t_1:t_2\right),\\
\left(x_1:x_2:x_3; t_1:t_2\right) &\mapsto&
\left(x_1:\lambda_2x_2:x_3; t_1:\lambda_2^3t_2\right).
\end{eqnarray*}
Moreover, the log Fano pair $(\mathbb{F}',\frac{1}{2}S')$ 
has a~vanishing $\mathbb{T}$-Futaki character (see \cite[\S 2.2]{Xu}).
Indeed, let $\mathbb{E}$ be the~exceptional divisor over $\mathbb{F}$ of the~weighted blowup with weights $(1,3)$ along local coordinates $(x_2,t_2)$.
Then 
\[
A_{\mathbb{F},\frac{1}{2}S}(\mathbb{E})=\frac{5}{2},\quad
S\left(M;\mathbb{E}\right)=S\left(M;D_2\right)+3S\left(M;F_1\right)=\frac{5}{2}
\]
by Lemma~\ref{lemma:toric}. Moreover, the~quotient map 
$\mathbb{F}\dasharrow\mathbb{P}^1$ by the torus $\mathbb{T}$ is given by
\[
\left(x_1:x_2:x_3; t_1:t_2\right)\mapsto\left(t_1x_2^3:t_2x_3^3\right).
\]
We claim that any vertical prime divisor $E$ on $\mathbb{F}$ satisfies  
$A_{\mathbb{F},\frac{1}{2}S}(E)>S\left(M; E\right)$.
Indeed, we have
\begin{eqnarray*}
A_{\mathbb{F},\frac{1}{2}S}(E)&=&\begin{cases}
\frac{1}{2} & \text{if }E=S'', \\
1 & \text{otherwise},
\end{cases}\\
S\left(M; E\right)&\leqslant& S\left(M; F_1\right)=\frac{3}{4}, \\
S\left(M; S''\right)&\leqslant& S\left(M; D_3\right)=\frac{1}{4}.
\end{eqnarray*}
Thus, the log Fano pair $(\mathbb{F}',\frac{1}{2}S')$ is K-polystable as in 
\cite[Theorem 1.31]{Book}.

Now, we suppose that $X$ is contained in Family $H_{17}$ and $S=\{x_1(x_2^3+x_3^3)=0\}$.
As above, we see that the group $\operatorname{Aut}\left(\mathbb{F},\frac{1}{2}S\right)$ 
contains a two-dimensional torus $\mathbb{T}\simeq\mathbb{G}_m^2$ that acts 
on $\mathbb{F}$ as follows:
\begin{eqnarray*}
\left(x_1:x_2:x_3; t_1:t_2\right) &\mapsto&\left(\lambda_1x_1:x_2:x_3; t_1:t_2\right),\\
\left(x_1:x_2:x_3; t_1:t_2\right) &\mapsto&\left(x_1:x_2:x_3; t_1:\lambda_2t_2\right).
\end{eqnarray*}
Then, since $S\left(M; F_1\right)=1$, the log Fano pair $(\mathbb{F}',\frac{1}{2}S')$ 
has vanishing $\mathbb{T}$-Futaki character.
Moreover, the~quotient map $\mathbb{F}\dasharrow\mathbb{P}^1$ by $\mathbb{T}$ is given by
\[
\left(x_1:x_2:x_3; t_1:t_2\right)\mapsto\left(x_2:x_3\right).
\]
As above, we see that $A_{\mathbb{F},\frac{1}{2}S}(E)>S\left(M; E\right)$ for any 
vertical prime divisor $E$ on $\mathbb{F}$,
which implies that the log Fano pair $(\mathbb{F}',\frac{1}{2}S')$ is K-polystable 
as in \cite[Theorem 1.31]{Book}.
\end{proof}

This completes the proof of the Main Theorem.

\subsubsection{Alternative Approach for K-polystable cases}\label{alternative-polystable}
Suppose that $X$ is the hypersurfaces \eqref{equation:H17}. We show that $X$ is K-polystable, using other techniques.  Similar argument can be used for \eqref{equation:H10}. We have $d_1=4$, $d_2=0$, $d_3=0$, and
$$
S=\big\{x_1(x_2^3+x_3^3)=0\big\}\subset\mathbb{F}(4,0,0).
$$
Here, we use notations introduced in Section~\ref{section:introduction}.
Let $G$ be the subgroup in $\mathrm{Aut}(X)$ generated by
\begin{align*}
(x:y:z:t:w)&\mapsto(x:y:t:z:w),\\
(x:y:z:t:w)&\mapsto(x:y:\omega_3t:-\omega_3(z+t):w),\\
(x:y:z:t:w)&\mapsto(ax+by:cx+dy:z:t:w),
\end{align*}
where $\omega_3$ is a~primitive cube root of unity,
and $a,b,c,d$ are complex numbers such that $ad-bc\ne 0$.
Then we have the~following $G$-equivariant commutative diagram:
$$
\xymatrix{
V\ar@{->}[d]_{\phi}\ar@{->}[drr]^{\eta}\ar@{->}[rr]^{\pi}&&\mathbb{F}(4,0,0)\ar@{->}[d]\\%
X\ar@{-->}[rr]^{\rho} && \mathbb{P}^1_{t_1,t_2}}
$$
where $\rho$ is the rational map given by $[x:y:z:t:w]\mapsto[x:y]$,
and $\mathbb{F}(4,0,0)\to\mathbb{P}^1_{t_1,t_2}$ is the~natural projection.
We also have the~following $G$-equivariant commutative diagram:
$$
\xymatrix{
&U\ar@{->}[dl]_{\varphi}\ar@{->}[dr]^{\vartheta}\\%
X\ar@{-->}[rr]^{\varrho} && \mathbb{P}^1}
$$
where $\varrho$ is the~map given by $[x:y:z:t:w]\mapsto[z:t]$,
$\vartheta$ is a~morphism, and $\varphi$ is a~weighted blow up of the~curve $\{z=t=w=0\}$.
Let $E_\phi$ and $E_{\varphi}$ be the exceptional surfaces of $\phi$ and $\varphi$, respectively.
Then $E_\phi$ and $E_\varphi$  are the~only $G$-invariant prime divisors over $X$.
We compute
$$
\beta(E_{\varphi})>0=\beta(E_{\phi}),
$$
which implies that $X$ is K-semistable by \cite[Corollary 4.14]{Zhuang}.
In particular, we see that $\mathrm{Fut}_X=0$.
On the other hand, $\varrho$ is just the~quotient map $X\dasharrow X/\mathbb{T}$,
where $\mathbb{T}$ is the~maximal torus in $G$.
Now, applying \cite[Proposition 1.38]{Book}, we see that $X$ is K-polystable.

\end{document}